\crefname{hypothesis}{Hypothesis}{Hypotheses}
\title{Scaling Optimized Hermite Approximation Methods\thanks{Prepared on \today.
\funding{This work was partially supported by the National Natural Science
	Foundation of China under Grant No. 12171467, 12494543, 12161141017, Strategic Priority Research Program of Chinese Academy of Sciences under Grant XDA0480504 and National Key
	R\&D Program of China under Grant No. 2021YFA1003601.}
  }
  }
\author{Hao Hu
\thanks{School of Mathematical Sciences, University of Chinese Academy of Sciences, Beijing 100049, China; LSEC, Institute of Computational Mathematics and Scientific/Engineering Computing, Academy of Mathematics
and Systems Science, Beijing 100190, China
  (\email{huhao@lsec.cc.ac.cn}).}
  \and HaiJun Yu
  \thanks{Corresponding author. State Key Laboratory of Mathematical Sciences \& LSEC, Institute of Computational Mathematics and Scientific/Engineering
Computing, Academy of Mathematics and Systems Science, Beijing 100190, China; School of Mathematical Sciences,
University of Chinese Academy of Sciences, Beijing 100049, China
  (\email{hyu@lsec.cc.ac.cn}).}
}
\begin{document}

\maketitle

\begin{abstract}
Hermite polynomials and functions have extensive applications in scientific and engineering problems. 
Although it is recognized that employing the scaled Hermite functions
rather than the standard ones can remarkably enhance the approximation
performance, the understanding of the scaling factor remains insufficient.
Due to the lack of theoretical analysis, recent publications still cast doubt on whether
the Hermite spectral method is inferior to other methods. To dispel this doubt,
we show in this article that the inefficiency of the Hermite spectral method
comes from the imbalance in the decay speed of the objective function within the spatial and frequency domains. 
Proper scaling can render the Hermite spectral methods comparable to other methods.
To make it solid, we propose a novel error analysis framework for the scaled Hermite approximation. Taking the $L^2$
projection error as an example, our framework
illustrates that there are three different components
of errors: the spatial truncation error, the frequency truncation error, and the Hermite spectral approximation error.
Through this perspective, finding the optimal scaling factor is equivalent
to balancing the spatial and frequency truncation errors. As applications, 
we show that geometric convergence can be recovered by proper scaling for a class of functions.
Furthermore, we show that proper scaling can double the convergence order for smooth functions with algebraic decay.
The perplexing pre-asymptotic sub-geometric convergence when approximating algebraic
decay functions can be perfectly explained by this framework.
\end{abstract}

\begin{keywords}
Hermite functions, scaling factor, Hermite spectral methods, error analysis, pre-asymptotic convergence
\end{keywords}

\begin{MSCcodes}
65N35, 33C45, 42C05
\end{MSCcodes}

\section{Introduction}
Spectral methods, which utilize orthogonal polynomials or functions as bases, constitute one of the most prevalent approaches to numerically solving partial
differential equations(PDEs). These methods possess a high level of approximation accuracy and hold a significant position in scientific and engineering computations. 

For unbounded regions, Hermite polynomials or Hermite functions are extremely useful, because they are often the exact unperturbed eigenfunctions~\cite{boyd_asymptotic_1984}. Their applicability spans a broad spectrum of fields, including Schr\"odinger equations on unbounded domains~\cite{bao_fourth-order_2005,shen_error_2013, sheng_nontensorial_2021}, Vlasov and kinetic equations~\cite{shan_discretization_1998, mieussens_discrete-velocity_2000, gibelli_spectral_2006, cai_numerical_2010,Mizerova.She2018,funaro_stability_2021,zhang_error_2023}, 
high-dimensional PDEs and stochastic
differential equations~\cite{luo_hermite_2013, zhang_sparse-grid_2013},
fluid dynamics and uncertainty quantification~\cite{cameron_orthogonal_1947,
meecham_wienerhermite_1964, orszag_dynamical_1967, xiu_modeling_2003, BabusNT2007StochasticCollocation,
venturi_wick-malliavin_2013, tang_discrete_2014, nobile_adaptive_2016,
wan_numerical_2019},
and even electronic design~\cite{manfredi_perturbative_2020, chen_analytical_2021}, to name a few. The orthogonality property intrinsically renders them the prime and natural choices for serving as bases in the numerical treatment of PDEs within unbounded domains. This unique characteristic subsequently gives rise to linear algebraic systems that exhibit favorable and well-conditioned behavior for linear problems with constant coefficients. Moreover, the fast
Gauss transform~\cite{greengard_fast_1991,wan_sharp_2006} can be effectively applied to construct highly efficient Hermite spectral methods for nonlinear and variable-coefficient problems.

However, Gottlieb and Orszag, in their seminal book on the spectral method~\cite{gottlieb_numerical_1977}, pointed out a significant drawback. They noted that, in contrast to Chebyshev or Legendre spectral methods within bounded domains, the standard Laguerre and Hermite spectral methods exhibit poor resolution capabilities. This observation is later supported by some rigorous error estimates for Hermite approximations~\cite{boyd_rate_1980,gautschi_error_1983, boyd_asymptotic_1984, guo_error_1999,  kazashi_suboptimality_2023,wang_convergence_2023}.
Although Gottlieb and
Orszag~\cite{gottlieb_numerical_1977} further indicated that the resolution of Laguerre and Hermite spectral methods can be enhanced through appropriate scaling; they are still suspected to be inferior to mapped spectral methods or bounded-domain spectral methods with truncation. 
To dispel this doubt, Tang~\cite{tang_hermite_1993} proposed a straightforward but remarkably effective approach in 1993 for the selection of the scaling factor pertinent to Gaussian-type functions. This innovation led to a substantial elevation in the approximation efficiency of Hermite spectral
methods. Subsequently, the concept of scaled spectral expansion has gained wide acceptance. Numerical investigations show that, in certain scenarios, scaled Hermite/Laguerre
methods surpass mapped spectral methods (see, e.g.,~\cite{shen_recent_2009,shen_spectral_2011,huang_improved_2024}). 
Meanwhile, practical algorithms that incorporate scaling techniques have also been developed.
For example, Ma et al.~\cite{ma_hermite_2005} proposed a time-dependent scaling strategy designed for parabolic equations, while frequency-dependent scaling mechanisms have also been developed~\cite{xia_efficient_2021, chou2023adaptive}.

Despite the progress made, dedicated and comprehensive error analyses that focus specifically on the scaling factor remain scarce.
In fact, even for the standard Hermite approximation, important gaps remain in our understanding.
The root-exponential rate $\exp(-C\sqrt{N})$ has been reported for analytic functions without explicit proof~\cite{barrett1961convergence,davis2007methods}.
Shen et al.~\cite{shen_new_2000,shen_recent_2009,shen_spectral_2011} also reported this rate when approximating algebraic decay functions in the pre-asymptotic range. This phenomenon vividly illustrates the complexity of the Hermite approximation and
is considered puzzling, since the error estimate only predicts a convergence order of about $N^{-h}$, here $h$ is related to the algebraic decay rate.
Boyd did some pioneering works, asymptotic results of Hermite coefficients for certain specific entire functions and analytic functions with poles were obtained by using the method of steepest descent~\cite{boyd_rate_1980,boyd_asymptotic_1984},
while the exponential convergence of Hermite approximation remains unexplained.
We notice that until recently, as we know, the sub-geometric convergence $\exp(-C\sqrt{N})$
for some analytic functions has finally been proven by Wang et al.~\cite{wang_convergence_2023, wang_convergence_2024}.
However, their result cannot explain the puzzling pre-asymptotic behavior
mentioned previously, and the exponential convergence rate
$\exp(-CN^\alpha)$ with $\alpha \ne \frac{1}{2}$ also remain unexplained.

Returning to the issue of the scaling factor, due to the lack of a solid theoretical analysis, some recent theoretical results on ``standard''(without scaling optimization) Hermite method lead to the conclusion that the ``standard'' Hermite method is inferior to other spectral methods. For example, 
Kazashi et al.~\cite{kazashi_suboptimality_2023} compare the Gauss--Hermite quadrature with the trapezoidal rule. They proved that standard Gauss--Hermite quadrature is only ``sub-optimal'', since for functions with $\alpha$ order smoothness in some sense, only an order about $N^{-\alpha/2}$ with $N$ function evaluations by Gauss--Hermite quadrature 
can be achieved.
In contrast, a suitably truncated trapezoidal rule achieves about $N^{-\alpha}$ up to a logarithmic factor. Kazashi et al. also mentioned that Sugihara~\cite{sugihara1997optimality} established the rate $\exp(-C N^{\rho / (\rho+1)})$ by the trapezoidal rule for functions decaying at the rate $\exp(-\tilde{C}|x|^\rho)\,\,(\rho \geq 1)$
on the real axis under other suitable assumptions. However, the existing literature
only shows that the Hermite approximation can only achieve a rate $\exp(-C\sqrt{N})$.

Trefethen~\cite{trefethen2022exactness} also pointed out that for functions analytic in
a strip with $\exp(-x^2)$ decay on the real axis, the Gauss--Legendre, Clenshaw--Curtis, and trapezoidal
quadrature can achieve a convergence rate of $\exp(-CN^{2/3})$. However, numerical results show
that the Gauss--Hermite quadrature can only achieve the $\exp(-C\sqrt{N})$ rate. Although Trefethen mentioned that
Weideman showed that $\exp(-CN^{2/3})$ can be achieved by Gauss--Hermite quadrature with a proper scaling in an unpublished work at a conference in 2018, we have not seen any subsequent published work that strictly proves this statement.

Such issues, collectively compelling a sense of urgency and confusion, prompted Professor Trefethen to offer a pessimistic assessment in hist recent paper published on SIAM Review~\cite{trefethen2022exactness}: ``The literature seems not to confront the conceptual question: What has gone wrong with the Gauss--Hermite notion of optimality? ''
This fundamental critique underscores a critical gap in our understanding, highlighting the profound significance and inherent perplexity surrounding the very foundations of this widely used approximation method. Resolving this impasse is not merely an academic exercise; it is essential for advancing the reliability and applicability of spectral methods based on Hermite polynomials.

In this paper, we provide a rigorous and impactful answer to this confusing impasse: the imbalance of decay speeds in the spatial and frequency domains leads to an inefficient utilization of collocation points, by devising a novel framework for the error analysis of scaled
Hermite spectral methods. This frame serves as a guide for the selection of the optimal scaling factor. All the aforementioned issues can be effectively and comprehensively resolved within this framework.
By using a proper scaling factor derived from our framework, one can
easily characterize the unusual convergence rate of the Hermite approximation. For example, for functions smooth enough with $\exp(-\tilde{C}|x|^\rho), \rho\ge 1$ decay (including the functions studied by Trefethen~\cite{trefethen2022exactness} and Sugihara~\cite{sugihara1997optimality}), 
the scaling optimized Hermite approximations achieve convergence rate $\exp(-CN^{\rho/(\rho+1)})$ (see \cref{sec: Hermite optimality} and \cref{subsec: exp conv}). 
For the $\alpha$-order smooth functions studied by Kazashi et al.~\cite{kazashi_suboptimality_2023}, the scaled Gauss--Hermite quadrature based on our framework can also achieve a convergence rate $N^{-\alpha}$ up to a logarithmic factor (see \cref{sec: Hermite optimality}).
Moreover, the puzzling pre-asymptotic behavior can also be lucidly explicated (see \cref{subsec: puzzling convergence}).

\medskip
Next, we give a brief mathematical introduction to Hermite functions and our main idea.

\subsection{Basics on Hermite functions}

The Hermite polynomials, defined on the entire line $\mathbb{R}:=(-\infty,+\infty)$,
are orthogonal with respect to the weight function $\omega(x) = e^{-x^2}$, namely,
\begin{equation}\label{eq:HermPoly-def}
  \int_{-\infty}^{+\infty} H_m(x) H_n(x) \omega(x) d x=\gamma_n \delta_{m n}, \quad \gamma_n=\sqrt{\pi} 2^n n!.
\end{equation}
Hermite functions are defined by
\begin{equation}\label{eq:HermFunc-def}
  \widehat{H}_n(x)=\frac{1}{\pi^{1 / 4} \sqrt{2^n n!}} e^{-x^2 / 2} H_n(x), \quad n \geq 0,\, x \in \mathbb{R},
\end{equation}
which are normalized so that
\begin{equation}\label{eq:Herm-orth}
  \int_{-\infty}^{+\infty} \widehat{H}_n(x) \widehat{H}_m(x) d x=\delta_{m n}.
\end{equation}
Hermite functions satisfy the three-term recurrence relation
\begin{equation}\label{eq:three-recur}
  \widehat{H}_{n+1}(x)=x \sqrt{\frac{2}{n+1}} \widehat{H}_n(x)-\sqrt{\frac{n}{n+1}} \widehat{H}_{n-1}(x), \quad n \geq 1, 
\end{equation}
with $\widehat{H}_0= \frac{1}{\pi^{1/4} } e^{-x^2/2}$,
$\widehat{H}_1= \frac{\sqrt{2}}{\pi^{1/4}} e^{-x^2/2}x $.
The derivatives of the Hermite functions satisfy
\begin{equation}\label{eq:Herm-derivative}
  \begin{aligned}
  \widehat{H}_n^{\prime}(x) & =\sqrt{2 n} \widehat{H}_{n-1}(x)-x \widehat{H}_n(x) \\
  & =\sqrt{\frac{n}{2}} \widehat{H}_{n-1}(x)-\sqrt{\frac{n+1}{2}} \widehat{H}_{n+1}(x), \quad n\ge 1.
  \end{aligned}
\end{equation}
More details on the properties of the Hermite functions can be found in Section
7.2.1 of~\cite{shen_spectral_2011}, or Chapter 18 of~\cite{olver_nist_2010}.

\subsection{An intuitive explanation of our idea}
\label{subsec:intuitive}
Let $P_N$ denote the collection of all polynomials with degree no more than $N$, define
$\widehat{P}_N$ as 
\begin{equation}
  \widehat{P}_N:=\bigl\{\phi: \phi=e^{-x^2 / 2} \psi,\, \forall\,\psi \in P_N\bigr\} .
\end{equation}
In Hermite interpolation, we use the roots of $\widehat{H}_{N+1}$, 
which are denoted by $\{x_j\}_{j=0}^N$ with order $x_0 < x_1 < \cdots < x_N$,
as the collocation points to reconstruct a function in $\widehat{P}_N$.
It is known (see, e.g., equation (7.86) in~\cite{shen_spectral_2011})
\begin{equation}
  \max _j\left|x_j\right| \sim \sqrt{2 N}.
\end{equation}
If we call the interval $\left[x_0, x_N\right]$ the collocation interval, the information outside the collocation interval is not used in interpolation; 
thus, if the interpolated function outside $\left[x_0, x_N\right]$, roughly $\bigl[-\sqrt{2N},\sqrt{2N}\bigr]$,
is not negligible (e.g. having a decay rate slower than Hermite functions), one cannot expect a good approximation. When using scaled
Hermite functions $\widehat{H}_n(\beta x)$, the exterior error outside
$\left[-\sqrt{2N}/\beta, \sqrt{2N}/\beta\right]$ should be considered.
In other words, we expect the spatial truncation error
\begin{equation}\label{eq:1116-0108}
  \Bigl\|u \cdot \mathbb{I}_{\bigl\{|x| \geqslant \sqrt{2N}/\beta\bigr\}} \Bigr\|
\end{equation}
to be an indicator of the scaled Hermite approximation (projection or interpolation) error. Here, $\|\cdot\|$ denotes the $L^2$ norm, and the indicator function $\mathbb{I}_A$ is defined by
  \begin{equation}\label{eq:indicator}
    \mathbb{I}_A(x)= \begin{cases}1, & x \in A, \\ 0, & \text { otherwise. }\end{cases}
  \end{equation}

So far, our discussion has not gone beyond the scope of previous research.
Based on a similar observation mentioned above,
a scaling factor $\beta = \sqrt{2N}/M$ is suggested by Tang~\cite{tang_hermite_1993} to
scale the collocation interval to $\left[-M, M\right]$ where the exterior error is negligible.

The key point of our idea is that we notice there is a duality between the Hermite approximation of a function $u$ and its Fourier transform
$\mathcal{F}[u]$. Define the Fourier transform as
\begin{equation}\label{eq:1122-0109}
  \mathcal{F}[u](k) = \frac{1}{\sqrt{2\pi}} \int_{-\infty}^{\infty} u(x) e^{-ikx} dx, 
\end{equation}
then we have (see \cref{rmk:1117-0205} and the comment after that)
\begin{equation}\label{eq:1117-0110}
  \mathcal{F}[\sqrt{\beta}\widehat{H}_n(\beta x)](k) = \frac{\left(-i\right)^n}{\sqrt{\beta}} \widehat{H}_n\left(\frac{k}{\beta}\right).
\end{equation}
Hence, approximating $u(x)$ by $\widehat{H}_n(\beta x)$ is the same
as approximating $\mathcal{F}[u](k)$ by $\widehat{H}_n\left({k}/{\beta}\right)$, at least for projection in the $L^2$ norm. Hence if $u$ outside
$\bigl[-\sqrt{2N}/\beta, \sqrt{2N}/\beta\bigr]$ (spatial truncation error), or $\mathcal{F}[u]$ outside $\bigl[-\beta \sqrt{2N}, \beta \sqrt{2N}\bigr]$ 
(frequency truncation error) is not negligible, we cannot expect a good approximation.
In other words, we expect
\begin{equation}\label{eq:1116-0111}
  \bigl\|u \cdot \mathbb{I}_{\{|x| \geqslant \sqrt{2N}/\beta\}}\bigr\| 
  + \bigl\|\mathcal{F}[u](k) \cdot \mathbb{I}_{\{|k| \geqslant \sqrt{2N}\beta\}}\bigr\|
\end{equation}
as an indicator of scaled Hermite approximation. This indicator
precisely characterizes the behavior of the Hermite approximation error, 
which we show in later sections through both theoretical analysis
and numerical experiments.

\subsection{How our results help better understand the Hermite approximation}
According to the previous analysis, if the spatial truncation error or the
frequency truncation error is not negligible, we cannot expect a good
approximation. Hence, finding the optimal scaling factor $\beta$ is equivalent
to balancing the truncation error outside $\bigl[-\sqrt{2N}/\beta, \sqrt{2N}/\beta\bigr]$
in the spatial domain and the truncation error outside $\bigl[-\beta \sqrt{2N}, \beta \sqrt{2N}\bigr]$
in the frequency domain, which guarantees that neither the spatial truncation error nor the frequency truncation error are too large. This simple observation leads
to many amazing results:
\begin{enumerate}
  \item For functions have exponential decay $\exp(-c_1|x|^a)$ in spatial
  domain and exponential decay $\exp(-c_2|k|^b)$ in frequency
  domain, by balancing we find the indicator \cref{eq:1116-0111} can achieve an order of
  $\exp(-c|N|^\gamma)$ using $N+1$ truncated terms, where $\gamma$ satisfy
  \begin{equation*}
    \gamma = \frac{ab}{a + b}.
  \end{equation*}
  From~\cite{boyd_fourier_2014}, we know the Fourier transform of $u = e^{-x^{2n}}, n \in \mathbb{N}^+$
  decay as $\exp(-c|k|^{\frac{2n}{2n-1}})$. Hence \cref{eq:1116-0111} can be improved from $\exp(-c_1|N|^{\frac{n}{2n-1}})$ to
 $\exp(-c_2 N)$ by proper scaling. As we show later, the Hermite approximation error
 (projection error or interpolation error) has a behavior similar to the indicator \cref{eq:1116-0111}.
  Besides, Hardy's uncertainty principle says that a nonzero function and its Fourier transform cannot both decay faster than $\exp(-cx^2)$ \cite{hardy_theorem_1933}, hence the spatial and frequency truncation error cannot both be smaller than $\exp(-cN)$. The above
  analysis implies that the indicator \cref{eq:1116-0111}  seems impossible to achieve
  a super geometric convergence.\\

  \item For functions have algebraic decay $1/(1+x^2)^h$ in the spatial
  domain and exponential decay $\exp(-c|x|^a)$ in the frequency domain
  or vice versa, the classical result shows that without scaling
  the Hermite projection error (we define it later in \cref{eq:def-projHat}) is $N^{-(h-1/4)}$. Our results show that by a proper scaling, the error can be improved to about $N^{-(2h-1/2)}$. That is to say,
  a doubled convergence order can be achieved for this kind of function. \\

  \item For functions have algebraic decay $1/(1+x^2)^{h_1}$ in spatial
  domain and algebraic decay $1/(1+k^2)^{h_2}$ in frequency domain,
  let $a = 2h_1 - 1/2, \, b = 2h_2 - 1/2, \, \gamma = ab/(a+b)$, similar
  to the first type (both exponential decay), the error can be improved
  from $N^{-\min \{a,b\} / 2}$ to $N^{-\gamma}$.\\
  
  \item In \cite{shen_stable_2000},
a sub-geometric convergence when approximating
algebraic decay functions is reported, and in \cite{shen_recent_2009},
it is eventually found that 
this convergence order only holds in the pre-asymptotic range.
That is, the error is $\exp(-c\sqrt{N})$
when using $N+1$ truncated terms, where $N$ is not a large number. This is puzzling,
as the classical error estimate only
predicts a rate of about $N^{-h}$. Through our theory, it
is not difficult to understand this phenomenon. For small $N$,
the approximation error is dominated by the frequency truncation error.
Since the Fourier transform has an exponential decay $\exp(-c|k|)$, the truncation
error outside a frequency interval with upper bound $\mathcal{O}(\sqrt{N})$ is, of course,
$\exp(-c\sqrt{N})$.\\

\end{enumerate}

The remainder of this paper is organized as follows. Our key results are presented in
\cref{sec:main}, more general results in a systematic presentation
are in \cref{sec:general},
experimental results and discussions are in \cref{sec:experiments}, and the conclusions follow in
\cref{sec:conclusions}.

\section{Projection error in $L^2$ norm}
\label{sec:main}
We first give the definition of Hermite projection, then discuss
the duality between a function $u$ and its Fourier transform $\mathcal{F}[u]$
when approximating them by Hermite functions. Afterwards, we establish the
error estimation theorem of the scaled Hermite approximation.

\subsection{Definition of projection}
Let $P_N$ denote the collection of all polynomials of degree no more
than $N$, define $\widehat{P}_N^\beta$ as
\begin{equation}\label{eq:def-PNHat}
  \widehat{P}_N^\beta:=\Bigl\{\phi: \phi=e^{- \beta^2 x^2 / 2} \psi, \quad \forall\, \psi \in P_N\Bigr\} .
\end{equation}
In particular, $\widehat{P}_N$ denotes $\widehat{P}_N^1$. 

We define 
projection $\widehat{\Pi}_N^\beta: L^2(\mathbb{R}) \rightarrow \widehat{P}_N^\beta$ by
\begin{equation}\label{eq:def-projHat}
  \bigl(u-\widehat{\Pi}_N^\beta u, v_N\bigr)=0, \quad \forall\, v_N \in \widehat{P}_N^\beta.
\end{equation}
Similarly, $\widehat{\Pi}_N$ denotes $\widehat{\Pi}_N^1$.


\subsection{The duality of $u$ and $\mathcal{F}[u]$ when approximating by Hermite functions}
We now explain \cref{eq:1117-0110} in detail. To this end, 
we require the following lemmas.

\begin{lemma}\label{lem:cn recurr}
  If $e^{-m x^2} e^{i k x}=\sum\limits_{n=0}^{\infty} c_n \widehat{H}_n$, then $\{c_n\}$ satisfy
  \begin{equation}\label{eq:1117-1}
    c_{n+1}=\frac{1}{2 m + 1} i k \sqrt{\frac{2}{n+1}} c_n-\frac{2 m - 1}{2 m + 1} \sqrt{\frac{n}{n+1}} c_{n-1}.
  \end{equation}
\end{lemma}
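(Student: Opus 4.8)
The plan is to exploit that the function $f(x):=e^{-m x^2}e^{ikx}$ satisfies a first-order linear ODE and to convert that ODE, term by term in the Hermite basis, into a recurrence for the coefficients $c_n=(f,\widehat{H}_n)$. Differentiating directly,
\begin{equation*}
  f'(x)-ik\,f(x)+2m\,xf(x)=0 .
\end{equation*}
When $\mathrm{Re}\,m>0$ the function $f$ belongs to the Schwartz class, so its Hermite expansion converges and may be differentiated term by term; hence it is enough to expand each of $f'$, $f$ and $xf$ in $\{\widehat{H}_n\}$ and compare the coefficients of $\widehat{H}_n$.

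First I would record the two basis identities needed. The second line of \cref{eq:Herm-derivative} gives $\widehat{H}_n'=\sqrt{n/2}\,\widehat{H}_{n-1}-\sqrt{(n+1)/2}\,\widehat{H}_{n+1}$, and rearranging the three-term recurrence \cref{eq:three-recur} gives $x\widehat{H}_n=\sqrt{(n+1)/2}\,\widehat{H}_{n+1}+\sqrt{n/2}\,\widehat{H}_{n-1}$; with the convention $\widehat{H}_{-1}:=0$ both hold for every $n\ge0$. Writing $f=\sum_{n\ge0}c_n\widehat{H}_n$, substituting these identities, and shifting the summation index in each series so that every term is indexed by the common mode $\widehat{H}_n$, the coefficient of $\widehat{H}_n$ equals $\sqrt{(n+1)/2}\,c_{n+1}-\sqrt{n/2}\,c_{n-1}$ in $f'$, equals $c_n$ in $f$, and equals $\sqrt{(n+1)/2}\,c_{n+1}+\sqrt{n/2}\,c_{n-1}$ in $xf$ (with the convention $c_{-1}:=0$).

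Plugging these three coefficients into the ODE and collecting the terms in $c_{n+1}$ and $c_{n-1}$ yields
\begin{equation*}
  (2m+1)\sqrt{(n+1)/2}\,c_{n+1}+(2m-1)\sqrt{n/2}\,c_{n-1}-ik\,c_n=0,
\end{equation*}
and solving for $c_{n+1}$, after simplifying the ratio of square roots, gives exactly \cref{eq:1117-1}. I do not expect a genuine obstacle here: the only points requiring a little care are justifying termwise differentiation of the Hermite series (automatic since $f$ is Schwartz when $\mathrm{Re}\,m>0$) and the index bookkeeping — in particular checking that with $\widehat{H}_{-1}=0$ and $c_{-1}=0$ the three shifted series all start at the correct mode, so that the $n=0$ case is also covered. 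The substance of the proof is this single coefficient-matching computation.
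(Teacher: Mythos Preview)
Your argument is correct. The ODE $f'-ikf+2mxf=0$ combined with the two Hermite identities does indeed yield $(2m+1)\sqrt{(n+1)/2}\,c_{n+1}+(2m-1)\sqrt{n/2}\,c_{n-1}-ik\,c_n=0$, and solving for $c_{n+1}$ gives the stated recurrence.

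Your route differs from the paper's. The paper starts from the integral $c_{n+1}=\int e^{-mx^2+ikx}\widehat{H}_{n+1}\,dx$, applies the three-term recurrence \eqref{eq:three-recur} to $\widehat{H}_{n+1}$, then integrates by parts by writing $x=(x-ik/(2m))+ik/(2m)$ so that the first piece becomes $-\tfrac{1}{2m}\tfrac{d}{dx}e^{-mx^2+ikx}$; the derivative is then thrown onto $\widehat{H}_n$ via \eqref{eq:Herm-derivative}, producing a linear relation among $c_{n-1},c_n,c_{n+1}$ that is solved for $c_{n+1}$. In short, the paper puts the derivative on the Hermite factor, whereas you put it on $f$. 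Your approach is somewhat cleaner: there is no artificial splitting of $x$ and no intermediate factors of $1/(2m)$ to carry along (and hence no apparent singularity at $m=0$, which the paper later uses by formally setting $m=0$ in the final recurrence). The paper's approach, on the other hand, never needs to justify termwise differentiation of an infinite series, since it only manipulates a single integral at a time; your Schwartz remark handles this when $\mathrm{Re}\,m>0$, and for $m=0$ one would instead pass to the pairing $(f',\widehat{H}_n)=-(f,\widehat{H}_n')$, which is exactly the paper's computation in disguise.
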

  {This lemma can be proved straightforwardly using the three-term recurrence relation~\cref{eq:three-recur}, the derivative relation \cref{eq:Herm-derivative}, and the integration by parts formula.}

\begin{lemma}\label{rmk:1117-0205}
  For Fourier transform $\mathcal{F}[u]$ of $u$ defined by \cref{eq:1122-0109}, we have
  \begin{equation}\label{eq:1117-0110alt}
    \mathcal{F}[\sqrt{\beta}\widehat{H}_n(\beta x)](k) = \frac{\left(-i\right)^n}{\sqrt{\beta}} \widehat{H}_n\left(\frac{k}{\beta}\right).
  \end{equation}
\end{lemma}
This is a known fact (see, for example,
\cite{duoandikoetxea_fourier_2000} and \cite{olver_nist_2010}). It has also been used
to build efficient numerical methods for solving partial differential equations~\cite{he_new_2014}.
This lemma for $\beta=1$ can be proved using \cref{lem:cn recurr} (with $m=0$) and some basics properties of Fourier transforms. A simple rescaling leads to the general case. We omit the details to save space. 

By \cref{rmk:1117-0205}, we immediately have the following result.

\begin{corollary}\label{cor:1122-0203}
For any $L^2$ function $u$, we have
  \begin{equation}
    \bigl\|u - \widehat{\Pi}_N^\beta u \bigr\| = \Bigl\|\mathcal{F}[u] - \widehat{\Pi}_N^{1/\beta} \mathcal{F}[u]\Bigr\|.
  \end{equation}
\end{corollary}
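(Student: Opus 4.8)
The plan is to exploit the fact that the Fourier transform is a unitary operator on $L^2(\mathbb{R})$ (Plancherel's theorem), so that it preserves norms, and that by \cref{rmk:1117-0205} it maps the approximation space $\widehat{P}_N^\beta$ isometrically (up to unimodular constants) onto $\widehat{P}_N^{1/\beta}$. Concretely, the scaled Hermite functions $\{\sqrt{\beta}\,\widehat{H}_n(\beta x)\}_{n=0}^N$ form an orthonormal basis of $\widehat{P}_N^\beta$, and \cref{eq:1117-0110} shows that $\mathcal{F}$ sends each such basis function to $(-i)^n (1/\sqrt{\beta})\widehat{H}_n(k/\beta) = (-i)^n \cdot \sqrt{1/\beta}\,\widehat{H}_n\bigl((1/\beta)k\bigr)$, which (up to the unimodular factor $(-i)^n$) is precisely the $n$-th orthonormal basis function of $\widehat{P}_N^{1/\beta}$. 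Hence $\mathcal{F}\bigl(\widehat{P}_N^\beta\bigr) = \widehat{P}_N^{1/\beta}$.

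First I would write the orthogonal projection explicitly in terms of the orthonormal basis: with $\phi_n^\beta(x) := \sqrt{\beta}\,\widehat{H}_n(\beta x)$ we have $\widehat{\Pi}_N^\beta u = \sum_{n=0}^N (u,\phi_n^\beta)\,\phi_n^\beta$, and similarly $\widehat{\Pi}_N^{1/\beta}\mathcal{F}[u] = \sum_{n=0}^N (\mathcal{F}[u],\phi_n^{1/\beta})\,\phi_n^{1/\beta}$. Next I would apply $\mathcal{F}$ to the identity $u - \widehat{\Pi}_N^\beta u$, using linearity and continuity of $\mathcal{F}$ to get $\mathcal{F}[u - \widehat{\Pi}_N^\beta u] = \mathcal{F}[u] - \sum_{n=0}^N (u,\phi_n^\beta)\,\mathcal{F}[\phi_n^\beta]$. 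By \cref{eq:1117-0110}, $\mathcal{F}[\phi_n^\beta] = (-i)^n \phi_n^{1/\beta}$, and by Parseval's identity for the inner product, $(u,\phi_n^\beta) = (\mathcal{F}[u],\mathcal{F}[\phi_n^\beta]) = (-i)^{-n}(\mathcal{F}[u],\phi_n^{1/\beta})$ (using that $\overline{(-i)^n} = i^n = (-i)^{-n}$). Multiplying these two factors the $(-i)^n$'s cancel, so $\mathcal{F}[\widehat{\Pi}_N^\beta u] = \sum_{n=0}^N (\mathcal{F}[u],\phi_n^{1/\beta})\,\phi_n^{1/\beta} = \widehat{\Pi}_N^{1/\beta}\mathcal{F}[u]$; that is, $\mathcal{F}$ intertwines the two projections. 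Finally, $\|u - \widehat{\Pi}_N^\beta u\| = \|\mathcal{F}[u - \widehat{\Pi}_N^\beta u]\| = \|\mathcal{F}[u] - \widehat{\Pi}_N^{1/\beta}\mathcal{F}[u]\|$ by Plancherel, which is the claim.

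An alternative, essentially equivalent route avoids bookkeeping the phase factors: since $\mathcal{F}$ is unitary and $\mathcal{F}(\widehat{P}_N^\beta) = \widehat{P}_N^{1/\beta}$, it carries the orthogonal projection onto $\widehat{P}_N^\beta$ to the orthogonal projection onto its image, i.e. $\mathcal{F}\,\widehat{\Pi}_N^\beta\,\mathcal{F}^{-1} = \widehat{\Pi}_N^{1/\beta}$; then $\|u - \widehat{\Pi}_N^\beta u\| = \|\mathcal{F}(u - \widehat{\Pi}_N^\beta u)\| = \|\mathcal{F}[u] - \widehat{\Pi}_N^{1/\beta}\mathcal{F}[u]\|$. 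I expect the only real point requiring care is verifying that the unimodular constants $(-i)^n$ genuinely do not affect the conclusion — either by the explicit cancellation in the first approach, or by invoking the abstract fact that a unitary map sends orthogonal projections onto subspaces to orthogonal projections onto their images, which is insensitive to any orthonormal basis choice. Everything else is a direct application of Plancherel's theorem and \cref{rmk:1117-0205}.
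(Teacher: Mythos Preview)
Your proposal is correct. The paper does not actually supply a proof of this corollary; it is stated immediately after \cref{rmk:1117-0205} as a direct consequence, with only the remark that ``approximating $u(x)$ by $\widehat{H}_n(\beta x)$ is the same as approximating $\mathcal{F}[u](k)$ by $\widehat{H}_n(k/\beta)$''. Your argument --- that $\mathcal{F}$ is unitary, maps $\widehat{P}_N^\beta$ onto $\widehat{P}_N^{1/\beta}$ by \cref{eq:1117-0110}, and hence intertwines the two orthogonal projections --- is exactly the justification the paper leaves implicit, and your handling of the $(-i)^n$ phases (either by explicit cancellation or by the abstract unitary-conjugation fact) is the right level of care.
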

\cref{cor:1122-0203} tells us that approximating $u(x)$ by $\widehat{H}_n(\beta x)$ is the same
as approximating $\mathcal{F}[u](k)$ by $\widehat{H}_n\left(\frac{k}{\beta}\right)$, at least for the projection in the $L^2$ norm.
This inspires us to develop an estimate that includes truncation errors in
both spatial and frequency domains.

\subsection{Projection error in $L^2$ norm: a description without proof}
In the next theorem, we will give an error estimate when using $N+1$
truncated terms of scaled Hermite functions $\widehat{H}_n(\beta x)$
to approximate a function $u$.
This error estimate has three distinct components:
\begin{enumerate}
  \item Spatial truncation error $\bigl\|u \cdot \mathbb{I}_{\{|x| \geqslant a\sqrt{N}/\beta\}}\bigr\|$.
  Here, $\|\cdot\|$ denotes the $L^2$ norm.
  \item Frequency truncation error $\bigl\|\mathcal{F}[u](k) \cdot \mathbb{I}_{\{|k| \geqslant b \sqrt{N} \beta\}}\bigr\|$.
  The Fourier transform $\mathcal{F}[u]$ is defined by \eqref{eq:1122-0109}
  \item Hermite spectral error $\|u\|e^{-cN}$.
\end{enumerate}

\begin{theorem}\label{thm:L2-proj}
  Let $a=b=\frac{1}{2\sqrt{2}},\,c=\frac{1}{16}$, $\|\cdot\|$
  denote the $L^2$ norm, $f \lesssim g$ means $f \leqslant Cg$, where $C$ represents a fixed constant.
  We have
  \begin{equation}
    \begin{aligned}
    \left\|u-\widehat{\Pi}_N^\beta u\right\| 
    \lesssim{} & \bigl\|u \cdot \mathbb{I}_{\{|x|>a \sqrt{N} / \beta\}}\bigr\| 
    + \bigl\|\mathcal{F}[u](k) \cdot \mathbb{I}_{\{|k|>b \sqrt{N} \beta\}}\bigr\| \\
    \quad & +\|u\| e^{-c N}.
    \end{aligned}
  \end{equation}
\end{theorem}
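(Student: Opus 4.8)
The plan is to split the projection error into three pieces by interpolating between two ``projections'' that ignore, respectively, the spatial tail and the frequency tail of $u$. Since $\widehat{\Pi}_N^\beta u$ is the $L^2$-best approximation from $\widehat{P}_N^\beta$, for \emph{any} $v_N\in\widehat{P}_N^\beta$ we have $\|u-\widehat{\Pi}_N^\beta u\|\le \|u-v_N\|$; the whole argument consists of constructing a good test element $v_N$. First I would treat the canonical case $\beta=1$ and recover the general case at the end via the scaling identity: by \eqref{eq:1117-0110}, $u\mapsto u(\cdot/\beta)/\sqrt\beta$ maps $\widehat P_N^\beta$ to $\widehat P_N^1$ isometrically on $L^2$, so the $\beta$-weighted truncation thresholds $a\sqrt N/\beta$ and $b\sqrt N\beta$ appear automatically once the $\beta=1$ estimate is known. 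Throughout I would use \cref{cor:1122-0203} to move freely between the spatial picture and the frequency picture.

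The core step is the unscaled estimate
\[
\|u-\widehat{\Pi}_N u\|\ \lesssim\ \|u\cdot\mathbb{I}_{\{|x|>a\sqrt N\}}\|+\|\mathcal F[u]\cdot\mathbb{I}_{\{|k|>b\sqrt N\}}\|+\|u\|\,e^{-cN}.
\]
To get this, introduce a smooth spatial cutoff $\chi$ equal to $1$ on $\{|x|\le a\sqrt N\}$ and supported in a slightly larger set, and split $u=\chi u+(1-\chi)u$. The second term is controlled directly by $\|u\cdot\mathbb{I}_{\{|x|>a\sqrt N\}}\|$ (up to a constant from the overlap region). For the first, compactly-supported-ish part $w:=\chi u$, I would bound its Hermite projection error using the classical weighted estimate: $\|w-\widehat{\Pi}_N w\|\lesssim N^{-m/2}\|\partial_x^{(m)}_{\omega}w\|$-type inequalities (Theorem 7.6 in \cite{shen_spectral_2011}, or the estimates of \cite{guo_error_1999}), but the key observation is that because $w$ is spatially localized in $[-a\sqrt N,a\sqrt N]$ with $a=\tfrac1{2\sqrt2}$, the relevant Hermite-Sobolev seminorms of $w$ are comparable to ordinary Sobolev norms of $w$ on that interval, which in turn—by Plancherel—are governed by $\|\,|k|^m\mathcal F[\chi u]\,\|$. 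Splitting the frequency integral at $b\sqrt N$ gives the frequency-tail term $\|\mathcal F[u]\cdot\mathbb{I}_{\{|k|>b\sqrt N\}}\|$ (the cutoff $\chi$ only smears this by a bounded amount), while the low-frequency part contributes $\|u\|$ times the decaying factor; the exponent $c=\tfrac1{16}$ and the precise constants $a=b=\tfrac1{2\sqrt2}$ are exactly what make the two $\sqrt N$-scales match so that the ``spectral'' leftover is genuinely $e^{-cN}$ rather than algebraic. An alternative, and probably cleaner, route for this middle step is to work entirely with the coefficients $\hat u_n=(u,\widehat H_n)$: the tail $\sum_{n>N}|\hat u_n|^2$ is the quantity to bound, and using the recurrence \eqref{eq:three-recur} together with \eqref{eq:Herm-derivative} one shows that multiplication by $x$ (hence by the Hermite number operator) shifts coefficients by one index, so that $\sum_{n>N}|\hat u_n|^2$ can be estimated by how well $u$ and $\mathcal F[u]$ are each concentrated on an $O(\sqrt N)$-ball—this is precisely the mechanism behind \cref{lem:cn recurr}.

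Finally I would assemble: apply the $\beta=1$ estimate to $\tilde u(x):=\beta^{-1/2}u(x/\beta)$, note $\|\tilde u\|=\|u\|$, that $\mathcal F[\tilde u](k)=\beta^{1/2}\mathcal F[u](\beta k)$ by the classical scaling of the Fourier transform, and that $\widehat{\Pi}_N\tilde u$ pulls back to $\widehat{\Pi}_N^\beta u$; changing variables in the three tail integrals turns $\{|x|>a\sqrt N\}$ into $\{|x|>a\sqrt N/\beta\}$ and $\{|k|>b\sqrt N\}$ into $\{|k|>b\sqrt N\beta\}$, yielding the stated inequality.

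The main obstacle is the middle term—proving that the spatially localized piece $\chi u$ is approximated by $N$ Hermite functions up to $\|\mathcal F[u]\cdot\mathbb{I}_{\{|k|>b\sqrt N\beta\}}\|+\|u\|e^{-cN}$ with the sharp constants. The difficulty is twofold: one must show that a function supported on roughly $[-a\sqrt N,a\sqrt N]$ is essentially captured by Hermite modes $n\le N$ (a ``finite-speed-of-propagation''/eigenvalue-counting fact for the harmonic oscillator $-\partial_x^2+x^2$, whose $n$-th eigenvalue is $2n+1$, so modes with $2n+1\lesssim x^2$ dominate inside $|x|\le\sqrt{2N}$), and one must do the bookkeeping so the cutoff $\chi$ and the overlap regions only cost bounded constants, not powers of $N$. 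Getting $c=\tfrac1{16}$ explicitly will require keeping careful track of the Gaussian factors $e^{-x^2/2}$ when estimating $\|(1-\chi)u\|$ against the untruncated $\|u\|$, i.e.\ using $e^{-x^2/2}\le e^{-a^2 N/2}$ on the discarded region together with the complementary frequency-side bound.
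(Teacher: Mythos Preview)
Your overall architecture matches the paper's: reduce to $\beta=1$ by rescaling, peel off a spatial tail with a cutoff, then a frequency tail, and show the doubly-localized remainder is captured by the first $N$ Hermite modes up to an exponentially small error. The gap is exactly where you place it---the middle term---and your two suggested mechanisms (classical $N^{-m/2}$ Hermite--Sobolev bounds with $m$ growing in $N$, or a vague ``coefficients via recurrence'' argument) do not close it. The $N^{-m/2}$ route would require $m\sim N$ to produce $e^{-cN}$, but the implicit constants in those estimates carry factorials in $m$ that explode; the recurrence suggestion points in the right direction but is not yet an argument, and the ``finite speed of propagation'' heuristic for the harmonic oscillator is likewise not a proof.

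The missing idea is a \emph{specific} choice of cutoff that turns the doubly-localized piece into a superposition of functions whose Hermite coefficients are known exactly. The paper takes the spatial mollifier to be $T_M=\mathbb{I}_{[-2M,2M]}*G$ with $G(x)=\frac{1}{\sqrt{2\pi}}e^{-x^2/2}$, so that after also truncating in frequency at $|k|\le B$ one has
\[
u_M^B(x)\;=\;\frac{1}{2\pi}\int_{|k|\le B}\int_{|s|\le 2M}\mathcal F[u](k)\,e^{-\frac12(x-s)^2+ikx}\,ds\,dk,
\]
an integral of the shifted, modulated Gaussians $g_{k,s}(x)=e^{-\frac12(x-s)^2+ikx}$. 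For these, \cref{lem:cn recurr} gives the Hermite coefficients in closed form, and \cref{cor:proj-error-Gaussian} yields
\[
\bigl\|g_{k,s}-\widehat\Pi_N g_{k,s}\bigr\|^2\;\lesssim\;\frac{1}{(N+1)!}\Bigl(\tfrac{k^2+s^2}{2}\Bigr)^{N+1}.
\]
Cauchy--Schwarz in $(k,s)$ over the bounded rectangle $\Omega=\{|k|\le B,\ |s|\le 2M\}$, followed by Stirling, converts this into $\|u\|^2 e^{-N/8}$ once $M=B=\tfrac{1}{2\sqrt2}\sqrt N$, and the choice of $2M$ for the support of $h_M$ is what makes $|T_M-1|\lesssim e^{-M^2/2}=e^{-N/16}$ on $\{|x|\le M\}$. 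The explicit constants $a=b=\tfrac{1}{2\sqrt2}$ and $c=\tfrac{1}{16}$ fall out of this calculation; they are not recoverable from a generic smooth cutoff $\chi$, because the mechanism is the exact formula for Hermite coefficients of Gaussians rather than an abstract Sobolev-type bound.
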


\begin{remark}
  Compared with the classical results, our analysis is more explanatory:
  \begin{enumerate}
    \item Let $\widehat{\partial}_x = \partial_x + x$.
      The classical result says that for fixed $m$, if 
      $\|\widehat{\partial}_x^m u\| < \infty, \, 0 \leqslant m \leqslant N+1$, then
      $\|u - \widehat{\Pi}_N u\|$ is $\mathcal{O}\left(N^{-m/2}\right)$ (see section 7.3.2 of \cite{shen_spectral_2011}).
      This can be understood by our theorem, let $e = u - \widehat{\Pi}_N u$, 
      by repeated use of \cref{lem:1102-lem-inverse} we recover
      \begin{equation}\label{eq:1118-0208}
        \left\|x^m e\right\|, \, \left\|k^m \mathcal{F}[e](k)\right\|=\left\|\partial_x^m e\right\| \lesssim \sum_{k \leqslant m} \left\|\widehat{\partial}_x^k e\right\|.
      \end{equation}
      Noticing that $\widehat{\partial}_x \widehat{H}_n(x) = \sqrt{2n} \widehat{H}_{n-1},\,n\geqslant 1$ and $\widehat{\partial}_x \widehat{H}_0(x) = 0$, by directly calculating the Hermite expansion coefficients, we have
      \begin{equation}
          \sum_{k \leqslant m} \left\|\widehat{\partial}_x^k e\right\| \lesssim \left\|\widehat{\partial}_x^m e \right\| <\infty.
      \end{equation}
      Hence $\left\|x^m e\right\|, \, \left\|k^m \mathcal{F}[e](k)\right\|=\left\|\partial_x^m e\right\| \lesssim \|\widehat{\partial}_x^m u\| < \infty $, which means $e(x)$ and $\mathcal{F}[e](x)$
      both have at least an algebraic decay $x^{-m}$, the spatial and frequency truncation error then satisfy
      \begin{displaymath}
        \begin{aligned}
        \|e \cdot \mathbb{I}_{\{|x|>a \sqrt{N}\}}\| &\lesssim N^{-\frac{m}{2}}\left\|\left(x^m e\right) \cdot \mathbb{I}_{\{|x|>a \sqrt{N}\}}\right\| \\
        &\lesssim N^{-\frac{m}{2}}\left\|x^m e\right\| , \\
        \left\|\mathcal{F}[e](k) \cdot \mathbb{I}_{\{|k|>b \sqrt{N}\}}\right\| &\lesssim N^{-\frac{m}{2}}\left\|\left(k^m \mathcal{F}[e](k)\right) \cdot \mathbb{I}_{\{|k|>b \sqrt{N}\}}\right\| \\
        &\lesssim N^{-\frac{m}{2}}\left\|\partial_x^m e\right\|.
        \end{aligned}
      \end{displaymath}
      Notice that $\bigl\|u - \widehat{\Pi}_N u\bigr\| = \bigl\|e - \widehat{\Pi}_N e\bigr\|$, then by \cref{thm:L2-proj},
      the projection error is, of course, $O\left(N^{-m/2}\right)$.
    \item Since for fixed $m$, the classical result only assumes algebraic decay in the spatial and frequency domains, it cannot indicate an exponential convergence order. Although choosing an $m$ that depends on $N$ can recover an exponential convergence order, $\|\widehat{\partial}_x^m u\|$, in general,
      is difficult to compute. Hence, it is still difficult to predict an exponential convergence order by the classical estimate. By our estimate, this is much easier.
    \item The classical result assumes $\|\widehat{\partial}_x^m u\| < \infty$.
      This condition mixes the information in the spatial and frequency domains. In contrast, our theorem separates spatial and frequency information, making it possible to analyze the impact of the scaling factor.
  \end{enumerate}
\end{remark}

To prove \cref{thm:L2-proj}, we first need to deal with a specific class
of Gaussian-type functions $e^{-(x-s)^2/2 + ikx}$.

\subsection{Projection error for a specific class of Gaussian-type functions}
We establish projection error control for a
specific class of Gaussian-type functions $e^{-(x-s)^2/2 + ikx}$ which will be used
to prove \cref{thm:L2-proj}. 
\begin{lemma}\label{cor:proj-error-Gaussian}
  Let $g_{k, s} (x)=e^{-\frac{1}{2}(x-s)^2+i k x}$, then
 \begin{equation}
    \big\|g_{k, s}-\widehat{\Pi}_N g_{k, s}\big\| \lesssim \frac{1}{\sqrt{(N+1)!}} \cdot\left(\frac{k^2+s^2}{2}\right)^{\frac{N+1}{2}},
  \end{equation}
  with the projection operator
 $\widehat{\Pi}_N$ defined in \cref{eq:def-projHat}.
\end{lemma}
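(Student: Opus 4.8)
The plan is to reduce the projection error to the $\ell^2$-tail of the Hermite coefficients of $g_{k,s}$, to compute those coefficients in closed form, and then to estimate the tail by recognising it as that of a Poisson distribution. To begin, by \cref{eq:HermFunc-def} each $\widehat{H}_n$ is $e^{-x^2/2}$ times a polynomial of exact degree $n$, so $\{\widehat{H}_0,\dots,\widehat{H}_N\}$ is an orthonormal basis of $\widehat{P}_N$ (orthonormality being \cref{eq:Herm-orth}); hence the operator $\widehat{\Pi}_N$ of \cref{eq:def-projHat} is precisely the truncation of the Hermite expansion, and Parseval gives
\[
  \bigl\|g_{k,s}-\widehat{\Pi}_N g_{k,s}\bigr\|^2=\sum_{n=N+1}^{\infty}|c_n|^2,\qquad
  c_n:=\int_{-\infty}^{+\infty} g_{k,s}(x)\,\widehat{H}_n(x)\,dx .
\]

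Next I would evaluate the coefficients $c_n$. Completing the square gives $g_{k,s}(x)=e^{-s^2/2}e^{-x^2/2}e^{(s+ik)x}$, so up to the constant $e^{-s^2/2}$ it is of the form $e^{-mx^2}e^{wx}$ with $m=\tfrac12$ and complex frequency $w=s+ik$. The integration-by-parts computation proving \cref{lem:cn recurr} is purely formal and is valid verbatim for complex $w$ (with $ik$ replaced by $w$); since here $2m-1=0$, the three-term recurrence \cref{eq:1117-1} degenerates to $c_{n+1}=\tfrac{w}{\sqrt2}\,(n+1)^{-1/2}c_n$, whence $c_n=c_0\,(w/\sqrt2)^n/\sqrt{n!}$. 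A Gaussian integral gives $c_0=\int g_{k,s}\widehat{H}_0=\pi^{1/4}e^{-s^2/2}e^{w^2/4}$, and a short computation of moduli (using $\operatorname{Re}(w^2)=s^2-k^2$ and $|w|^2=s^2+k^2$) collapses everything to
\[
  |c_n|^2=\sqrt{\pi}\,e^{-\lambda}\,\frac{\lambda^n}{n!},\qquad \lambda:=\frac{k^2+s^2}{2}.
\]
(Alternatively the same $c_n$ can be read off directly from the Hermite generating function $e^{-x^2/2+\sqrt2\,xz-z^2/2}=\pi^{1/4}\sum_{n\ge0}z^n(n!)^{-1/2}\widehat{H}_n$ with $z=w/\sqrt2$.)

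Finally I would estimate the tail. The numbers $|c_n|^2/\sqrt{\pi}$ form exactly a Poisson$(\lambda)$ mass function, and the elementary inequality $(N+1)!/(N+1+j)!\le 1/j!$ yields
\[
  \sum_{n=N+1}^{\infty}\frac{\lambda^n}{n!}
  =\frac{\lambda^{N+1}}{(N+1)!}\sum_{j=0}^{\infty}\frac{\lambda^{j}(N+1)!}{(N+1+j)!}
  \le \frac{\lambda^{N+1}}{(N+1)!}\,e^{\lambda},
\]
so that $\|g_{k,s}-\widehat{\Pi}_N g_{k,s}\|^2\le\sqrt{\pi}\,\lambda^{N+1}/(N+1)!$; taking square roots gives the claimed bound with implicit constant $\pi^{1/4}$.

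There is no real obstacle here: the argument is essentially a closed-form computation followed by a one-line Poisson-tail estimate. The points that need a little care are the (routine) remark that \cref{lem:cn recurr} extends to complex frequency, the bookkeeping of the constants entering $c_0$, and the observation that $g_{k,s}\in L^2(\mathbb{R})$ so that Parseval and the termwise manipulations are legitimate. The one genuinely informative fact is structural — the Hermite coefficients of this Gaussian wave packet are Poisson distributed — and it is precisely this that makes the tail estimate, and hence the whole lemma, clean.
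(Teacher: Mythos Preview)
Your proof is correct and essentially the same as the paper's: both rewrite $g_{k,s}$ as $e^{-s^2/2}e^{-x^2/2}e^{wx}$ with a complex parameter ($w=s+ik$ for you, $iz$ with $z=k-is$ in the paper, which is the same thing), invoke \cref{lem:cn recurr} at $m=\tfrac12$ to get $c_n=c_0\,(w/\sqrt2)^n/\sqrt{n!}$, and then bound the tail $\sum_{n>N}\lambda^n/n!$ by $e^{\lambda}\lambda^{N+1}/(N+1)!$ (the paper phrases this last step as the Lagrange remainder of $e^{\lambda}$, you as the inequality $(N+1)!/(N+1+j)!\le 1/j!$, but the resulting bound and implicit constant $\pi^{1/4}$ are identical).
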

\begin{proof}
  Let $g_{k,s}(x) = \sum\limits_{n=0}^\infty c_n(k,s) \widehat{H}_n(x)$,
  noticing that
  \begin{displaymath}
    g_{k,s}(x) = e^{-s^2/2} \cdot e^{-x^2/2 + i(k-is)x},
  \end{displaymath}
  combining with \cref{rmk:1117-0205} yields (also a direct consequence of (14.9) and (14.10) of \cite{chihara1955generalized})
  \begin{equation}
    c_n(k,s)=\pi^{1 / 4} e^{-\frac{z^2}{4}-\frac{s^2}{2}} \frac{(i z)^n}{\sqrt{2^n n!}},
  \end{equation}
  where $z = k - is$. Hence, by using the Lagrange remainder of Taylor expansion
  \begin{equation}
    \begin{aligned}
    \big\|g_{k, s}-\widehat{\Pi}_N g_{k, s}\big\|^2 & =\sum_{n > N}\left|c_n(k, s)\right|^2 \\
    & =\left|c_0(k,s)\right|^2 \sum_{n > N} \frac{1}{n !}\left(\frac{|z|^2}{2}\right)^n \\
    & =\left|c_0(k,s)\right|^2 \frac{\left(\frac{|z|^2}{2}\right)^{N+1}}{(N+1) !} e^{\theta \frac{|z|^2}{2}} \quad(0 \leqslant \theta \leqslant 1).
    \end{aligned} 
  \end{equation}
  Notice that $c_0(k,s)=\pi^{1 / 4} e^{-\frac{z^2}{4}-\frac{s^2}{2}}$, we have
  \begin{equation}
    \begin{aligned}
    \left\|g_{k, s}-\widehat{\Pi}_N g_{k, s}\right\|^2 & \leqslant \sqrt{\pi} \cdot e^{-\frac{|z|^2}{2}} \cdot \frac{\left(\frac{|z|^2}{2}\right)^{N+1}}{(N+1) !} \cdot e^{\frac{|z|^2}{2}} \\
    & \lesssim \frac{1}{(N+1)!} \left(\frac{k^2+s^2}{2}\right)^{N+1}.
    \end{aligned} 
  \end{equation}
  This ends our proof.
\end{proof}
Now we return to the proof of \cref{thm:L2-proj}.

\subsection{Proof of \cref{thm:L2-proj}}
The idea of the proof is as follows. By properly truncating
the objective function in the spatial and frequency domains,
the remaining part will be a combination of a specific
class of Gaussian type functions, which look like $e^{-(x-s)^2/2+i \xi x}$.
Recall that we deal with these functions in \cref{cor:proj-error-Gaussian}.

Now we prove \cref{thm:L2-proj}.
\begin{proof}
  Let $v(x) = 1/\sqrt{\beta} \cdot u(x/\beta)$. Since 
  \begin{equation}
    \big\|u - \widehat{\Pi}_N^\beta u\big\| = \big\|v - \widehat{\Pi}_N v\big\|,
  \end{equation}
  we only need to consider $\beta = 1$.
  Let $M = \frac{1}{2\sqrt{2}} \sqrt{N}$, consider
  \begin{equation}\label{eq:1030-7}
    h_M(x)=\mathbb{I}_{[-2 M, 2 M]}, \quad G(x)=\frac{1}{\sqrt{2\pi}} e^{-x^2/2}, \quad T_M=h_M * G.
  \end{equation}
  Let $u_M = u \cdot T_M$, then
  \begin{equation}\label{eq:1030-8}
    \begin{aligned}
    \bigl\|u-\widehat{\Pi}_N u\bigr\| 
    & \leqslant\left\|u-u_M\right\| +\bigl\|u_M-\widehat{\Pi}_N u_M\bigr\|+\bigl\|\widehat{\Pi}_N u_M-\widehat{\Pi}_N u\bigr\| \\
    & \lesssim\left\|u-u_M\right\|+\bigl\|u_M-\widehat{\Pi}_N u_M\bigr\| \\
    & \triangleq E_1+E_2.
    \end{aligned}
  \end{equation}
  For $E_1$ we have
  \begin{equation}\label{eq:1030-9}
    \begin{aligned}
    \left\|u-u_M\right\| & \leqslant\left\|\left(u-u_M\right) \cdot \mathbb{I}_{\{|x| \leqslant M\}}\right\|+\left\|\left(u-u_M\right) \cdot \mathbb{I}_{\{|x|>M\}}\right\| \\
    & \triangleq E_{11}+E_{12}.
    \end{aligned}
  \end{equation}
  For $E_{11}$ we have
  \begin{equation}\label{eq:1030-10}
    \begin{aligned}
    E_{11} & = \left\|\left(u-u_M\right) \cdot \mathbb{I}_{\{|x| \leqslant M\}}\right\| \\
    & =\left(\int_{|x| \leqslant M} u^2\left(T_M-1\right)^2 d x\right)^{1 / 2} \\
    & \leqslant \| \left(T_M-1\right) \cdot \mathbb{I}_{\{|x| \leqslant M\}}\left\|_{\infty}\right\| u \|.
    \end{aligned}
  \end{equation}
  Notice that when $|x| \leqslant M$, we have
  \begin{equation}\label{eq:1030-11}
    \begin{aligned}
    \left|T_M-1\right| & =\left|\int_{-2 M}^{2 M} \frac{1}{\sqrt{2\pi}} e^{-\cdot (s-x)^2/2} d s-1\right| \\
    & \leqslant \int_{|s|>M} \frac{1}{\sqrt{2\pi}} e^{-s^2/2} d s \\
    & \lesssim e^{- M^2/2}=e^{-\frac{1}{16} N}.
    \end{aligned}
  \end{equation}
  Putting \cref{eq:1030-11} back into \cref{eq:1030-10} yields
  \begin{equation}\label{eq:1030-12}
    \left\|\left(u-u_M\right) \cdot \mathbb{I}_{\{|x| \leqslant M\}}\right\| \lesssim e^{-\frac{1}{16} N}\|u\|.
  \end{equation}
  As for $E_{12}$ we have
  \begin{equation}\label{eq:1030-13}
    \begin{aligned}
    E_{12} & = \left\|(u - u_M) \cdot \mathbb{I}_{\{|x|>M\}}\right\| \\
    & \leqslant\|u \cdot \mathbb{I}_{\{|x|>M\}}\|+\left\|u_M \cdot \mathbb{I}_{\{|x|>M\}}\right\| \\
    & \lesssim\|u \cdot \mathbb{I}_{\{|x|>M\}}\|.
    \end{aligned}
  \end{equation}
  Combining \cref{eq:1030-12}, \cref{eq:1030-13} with \cref{eq:1030-9} yields
  \begin{equation}\label{eq:1030-14}
    \left\|u-u_M\right\| \lesssim e^{-\frac{1}{16} N}\|u\|+\|u \cdot \mathbb{I}_{\{|x|>M\}}\|.
  \end{equation}
  Let $B = M = \frac{1}{2\sqrt{2}} \sqrt{N}$, consider
  \begin{equation}\label{eq:1030-15}
    u^B=\frac{1}{\sqrt{2 \pi}} \int_{|k| \leqslant B} \mathcal{F}[u](k) e^{i k x} d k, \quad u_M^B=u^B T_M,
  \end{equation}
  we have
  \begin{equation}\label{eq:1030-16}
    \begin{aligned}
    \left\|u_M-\widehat{\Pi}_N u_M\right\| & \leqslant\left\|u_M-u_M^B\right\|+\left\|u_M^B-\widehat{\Pi}_N u_M^B\right\|+\left\|\widehat{\Pi}_N u_M^B-\widehat{\Pi}_N u_M\right\| \\
    & \lesssim\left\|u_M-u_M^B\right\|+\left\|u_M^B-\widehat{\Pi}_N u_M^B\right\| \\
    & \triangleq E_{21} + E_{22}.
    \end{aligned}
  \end{equation}
  For $E_{21}$ we have
  \begin{equation}\label{eq:1030-17}
    \begin{aligned}
    E_{21} & =\left\|\left(u-u^B\right) \cdot T_M\right\| \\
    & \leqslant\left\|u-u^B\right\| \\
    & =\|\mathcal{F}[u](k) \cdot \mathbb{I}_{\{|k|>B\}}\|.
    \end{aligned}
  \end{equation}
  Consider
  \begin{equation}\label{eq:1030-20}
    g_{k, s}(x)=e^{-(x-s)^2/2+i k x}=\sum_n c_n(k, s) \widehat{H}_n(x).
  \end{equation}
  Let $\Omega=\{(k, s):|k| \leqslant B, |s| \leqslant 2 M\}$,
  we have
  \begin{equation}
    \begin{aligned}
    u_M^B & =u^B \cdot T_M \\
    & =\frac{1}{2 \pi} \int_{\Omega} \mathcal{F}[u](k) e^{- (x-s)^2/2 +i k x} d k d s.
    \end{aligned}
  \end{equation}
  Then by \cref{eq:1030-20}, we recover
  \begin{equation}\label{eq:1030-18}
    \begin{aligned}
    u_M^B & =\frac{1}{2 \pi} \int_{\Omega} \mathcal{F}[u](k) \left(\sum_n c_n(k, s) \widehat{H}_n(x)\right) d k d s \\
    & =\sum_n \left(\frac{1}{2 \pi} \int_{\Omega} \mathcal{F}[u](k) c_n(k, s) d k d s\right) \widehat{H}_n(x) \\
    & \triangleq \sum_n c_n \widehat{H}_n(x).
    \end{aligned}
  \end{equation}
  We have
  \begin{equation}
    \begin{aligned}
    \left\|u_M^B-\widehat{\Pi}_N u_M^B\right\|^2 & =\sum_{n>N}\left|c_n\right|^2 \\
    & = \sum_{n>N} \frac{1}{4 \pi^2} \left(\int_{\Omega} \mathcal{F}[u](k) c_n(k, s) d k d s\right)^2.
    \end{aligned}
  \end{equation}
  Using Cauchy-Schwarz inequality yields
  \begin{equation}\label{eq:1030-22}
    \begin{aligned}
    \left\|u_M^B-\widehat{\Pi}_N u_M^B\right\|^2 & \lesssim  \int_{\Omega}|\mathcal{F}[u](k)|^2 d k d s \cdot \int_{\Omega} \sum_{n>N} \left|c_n(k, s)\right|^2 d k d s \\
    & \lesssim 4 M\|u\|^2 \int_{\Omega} \left\|g_{k, s}-\widehat{\Pi}_N g_{k, s}\right\|^2 d k d s .
    \end{aligned}
  \end{equation}
  Let $D=\left\{(k, s): k^2+s^2 \leqslant B^2+(2 M)^2\right\}$, by \cref{cor:proj-error-Gaussian}, 
  \begin{equation}\label{eq:1207-2}
    \begin{aligned}
    \int_{\Omega}\left\|g_{k, s}-\widehat{\Pi}_N g_{k, s}\right\|^2 d k d s 
    & \lesssim \int_{\Omega} \frac{1}{(N+1)!}\left(\frac{k^2+s^2}{2}\right)^{N+1} d k d s \\
    & \lesssim \frac{1}{(N+1)!} \int_D\left(\frac{k^2+s^2}{2}\right)^{N+1} d k d s.
    \end{aligned}
  \end{equation}
  Let $R=\sqrt{B^2+(2 M)^2}=\frac{\sqrt{5}}{2 \sqrt{2}} \sqrt{N}$,
  \begin{equation}\label{eq:1207-3}
    \begin{aligned}
    \int_D\left(\frac{k^2+s^2}{2}\right)^{N+1} d k d s & =\int_0^{2 \pi} d \theta \int_0^R\left(\frac{r^2}{2}\right)^{N+1} r d r \\
    & =2 \pi \cdot 2^{-(N+1)} \cdot \frac{R^{2 N+4}}{2 N+4}.
    \end{aligned}
  \end{equation}
  Combining $n!>\sqrt{2 \pi} n^{n+1 / 2} e^{-n}$ with \cref{eq:1207-2}, \cref{eq:1207-3} yields
  \begin{equation}\label{eq:1207-4}
    \int_{\Omega}\bigl\|g_{k, s}-\widehat{\Pi}_N g_{k, s}\bigr\|^2 d k d s \lesssim N^{-1/2} \cdot e^{-\frac{N}{8}}.
  \end{equation}
  Putting \cref{eq:1207-4} into \cref{eq:1030-22} yields
  \begin{equation}\label{eq:1030-23}
    \left\|u_M^B-\widehat{\Pi}_N u_M^B\right\| \lesssim\|u\| e^{-\frac{1}{16} N}.
  \end{equation}
  Combining \cref{eq:1030-17}, \cref{eq:1030-23} with \cref{eq:1030-16} yields
  \begin{equation}\label{eq:1030-24}
    \bigl\|u_M-\widehat{\Pi}_N u_M\bigr\| \lesssim\|\mathcal{F}[u](k) \cdot \mathbb{I}_{\{|k|>B\}}\|+\|u\| e^{-\frac{1}{16} N}.
  \end{equation}
  Putting \cref{eq:1030-14}, \cref{eq:1030-24} back into \cref{eq:1030-8} yields
  \begin{equation}\label{eq:1030-25}
    \begin{aligned}
    \bigl\|u-\widehat{\Pi}_N u\bigr\| & \lesssim\Bigl\|u \cdot \mathbb{I}_{\left\{|x|>\frac{1}{2\sqrt{2}} \sqrt{N}\right\}}\Bigr\| \\
    & +\Bigl\|\mathcal{F}[u](k) \cdot \mathbb{I}_{\left\{|k|>\frac{1}{2\sqrt{2}} \sqrt{N}\right\}}\Bigr\| \\
    & +\|u\| e^{-\frac{1}{16} N},
    \end{aligned}
  \end{equation}
  this ends the proof of \cref{thm:L2-proj}.
\end{proof}

\section{Projection error with higher-order derivatives and interpolation error}
\label{sec:general}
We now consider the error estimate for $\bigl\|\partial_x^l \bigl(
  u - \widehat{\Pi}_N^{\beta} u \bigr)\bigr\|$. It is not difficult
to imagine that $\bigl\|\partial_x^l \bigl(u - \widehat{\Pi}_N^{\beta} u \bigr)\bigr\|$
can be controlled by the spatial truncation error, the frequency truncation
error, and the Hermite spectral error of $\partial_x^j u$, where $0 \leqslant j \leqslant l$.
The only difference is that an amplification factor from the inverse inequality will be introduced.

We first introduce inverse inequalities that will be used later and
then give estimates for projection error with high-order derivatives.
Interpolation errors are also considered. Since there is no
essential difficulty here, we will omit the detailed proof and
give only the conclusions.
\subsection{Inverse inequalities}
By Lemma 2.2 of \cite{guo_error_1999} we have the following lemma:
\begin{lemma}\label{thm:1102-inverse}
  Let $\widehat{\partial}_x = \partial_x + x$. For any
  $\Psi \in \widehat{P}_N$ (defined in \cref{eq:def-PNHat}), we have
  \begin{equation}
    \bigl\|\widehat{\partial}_x \Psi\bigr\| \lesssim \sqrt{N}\|\Psi\| .
  \end{equation}
\end{lemma}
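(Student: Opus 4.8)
The plan is to exploit the fact that, on the Hermite function basis, the operator $\widehat{\partial}_x = \partial_x + x$ acts as a pure lowering (annihilation) operator, so the estimate reduces to a weighted Parseval identity. Concretely, from the derivative formula \cref{eq:Herm-derivative} one has $\widehat{H}_n'(x) = \sqrt{2n}\,\widehat{H}_{n-1}(x) - x\widehat{H}_n(x)$, hence
\begin{equation*}
  \widehat{\partial}_x \widehat{H}_n = \widehat{H}_n' + x\widehat{H}_n = \sqrt{2n}\,\widehat{H}_{n-1}, \quad n\geqslant 1,
\end{equation*}
and $\widehat{\partial}_x \widehat{H}_0 = 0$.

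First I would write an arbitrary $\Psi \in \widehat{P}_N$ in the Hermite basis, $\Psi = \sum_{n=0}^N a_n \widehat{H}_n$; this is exactly the statement that $\widehat{P}_N$ is the span of $\widehat{H}_0,\dots,\widehat{H}_N$, which is immediate from the definition \cref{eq:HermFunc-def}. Applying the lowering relation term by term gives $\widehat{\partial}_x \Psi = \sum_{n=1}^N a_n \sqrt{2n}\,\widehat{H}_{n-1}$. Since the $\widehat{H}_m$ are $L^2$-orthonormal by \cref{eq:Herm-orth}, Parseval's identity yields
\begin{equation*}
  \bigl\|\widehat{\partial}_x \Psi\bigr\|^2 = \sum_{n=1}^N 2n\,|a_n|^2 \leqslant 2N \sum_{n=0}^N |a_n|^2 = 2N\,\|\Psi\|^2 ,
\end{equation*}
and taking square roots gives $\bigl\|\widehat{\partial}_x\Psi\bigr\| \leqslant \sqrt{2N}\,\|\Psi\|$, which is the claimed bound with an explicit constant.

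There is essentially no obstacle here: the only things to verify are the commutation identity $\widehat{\partial}_x \widehat{H}_n = \sqrt{2n}\,\widehat{H}_{n-1}$, which follows directly from \cref{eq:Herm-derivative}, and the elementary bound $2n \leqslant 2N$ on the coefficients. The factor $\sqrt{N}$ is sharp, since equality (up to the constant) is attained by $\Psi = \widehat{H}_N$. The same computation, iterated, also gives $\bigl\|\widehat{\partial}_x^{\,l}\Psi\bigr\| \lesssim N^{l/2}\|\Psi\|$ for $\Psi\in\widehat{P}_N$, which is precisely the form of the inverse inequality needed for the higher-order derivative estimates in the remainder of this section.
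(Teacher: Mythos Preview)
Your proof is correct. The paper does not actually prove this lemma; it simply cites Lemma~2.2 of Guo~\cite{guo_error_1999} and states the result. Your argument is the natural self-contained proof: expand $\Psi$ in the orthonormal Hermite basis, use the lowering identity $\widehat{\partial}_x\widehat{H}_n=\sqrt{2n}\,\widehat{H}_{n-1}$ (which is exactly equation~\cref{eq:Herm-derivative} rewritten), and apply Parseval. This yields the sharp explicit constant $\sqrt{2}$, and the iteration you mention indeed gives $\bigl\|\widehat{\partial}_x^{\,l}\Psi\bigr\|\leqslant (2N)^{l/2}\|\Psi\|$ for $\Psi\in\widehat{P}_N$. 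So your route is more informative than the paper's bare citation, and there is nothing to fix.
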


By (B.36b) of \cite{shen_spectral_2011} we have the following lemma:
\begin{lemma}\label{lem:1102-lem-inverse}
  Let $\widehat{\partial}_x = \partial_x + x$, then
  \begin{equation}
    \|xu\| \leqslant \|u\| + \|\widehat{\partial}_x u\|. 
  \end{equation}
\end{lemma}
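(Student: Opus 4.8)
The plan is to obtain the inequality from an exact $L^2$ identity for the operator $\widehat{\partial}_x = \partial_x + x$, produced by expanding a square and integrating by parts a single time. I would first reduce to the case of $u$ smooth and rapidly decaying; the statement for general $u$ in the natural space $\{u : u,\ \partial_x u,\ xu \in L^2(\mathbb{R})\}$ then follows by a routine density argument, since both sides are continuous in the norm $\|u\| + \|\partial_x u\| + \|xu\|$. For such $u$, expanding gives
\[
  \bigl\|\widehat{\partial}_x u\bigr\|^2
  = \int_{\mathbb{R}} (\partial_x u)^2\,dx + 2\int_{\mathbb{R}} x\,u\,\partial_x u\,dx + \int_{\mathbb{R}} x^2 u^2\,dx
  = \|\partial_x u\|^2 + 2\int_{\mathbb{R}} x\,u\,\partial_x u\,dx + \|xu\|^2 .
\]

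Next I would handle the cross term. Writing $2x\,u\,\partial_x u = x\,\partial_x(u^2)$ and integrating by parts,
\[
  2\int_{\mathbb{R}} x\,u\,\partial_x u\,dx = \bigl[x u^2\bigr]_{-\infty}^{+\infty} - \int_{\mathbb{R}} u^2\,dx = -\|u\|^2 ,
\]
where the boundary term vanishes because $(x u^2)' = u^2 + 2 x u\,\partial_x u \in L^1(\mathbb{R})$ (the first summand since $u \in L^2$, the second by Cauchy--Schwarz since $xu,\ \partial_x u \in L^2$), so $x u^2$ has finite limits at $\pm\infty$, and these limits must be $0$ since $u^2 \in L^1(\mathbb{R})$. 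This yields the exact identity
\[
  \|xu\|^2 = \bigl\|\widehat{\partial}_x u\bigr\|^2 + \|u\|^2 - \|\partial_x u\|^2 .
\]

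Finally, I would discard the nonnegative term $\|\partial_x u\|^2$ and apply $a^2 + b^2 \leqslant (a+b)^2$ for $a,b \geqslant 0$ with $a = \|\widehat{\partial}_x u\|$ and $b = \|u\|$:
\[
  \|xu\|^2 \leqslant \bigl\|\widehat{\partial}_x u\bigr\|^2 + \|u\|^2 \leqslant \bigl(\|u\| + \|\widehat{\partial}_x u\|\bigr)^2 ,
\]
and take square roots. The only delicate point is the vanishing of the boundary term, i.e.\ pinning down the function space in which the density argument is carried out; the rest is elementary. Alternatively one may simply invoke (B.36b) of \cite{shen_spectral_2011}, where this estimate is recorded.
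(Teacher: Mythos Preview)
Your argument is correct. The paper itself does not supply a proof of this lemma at all: it simply records the inequality with a pointer to (B.36b) of \cite{shen_spectral_2011}. Your computation --- expanding $\|\widehat{\partial}_x u\|^2$, integrating $\int x\,\partial_x(u^2)\,dx$ by parts to obtain the exact identity $\|xu\|^2 = \|\widehat{\partial}_x u\|^2 + \|u\|^2 - \|\partial_x u\|^2$, and then discarding $\|\partial_x u\|^2$ --- is precisely the standard derivation behind that reference, so there is nothing substantive to compare. You even mention invoking the citation as an alternative, which is literally all the paper does.
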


Combining \cref{thm:1102-inverse} with \cref{lem:1102-lem-inverse}
we have the following result.
\begin{corollary}
  For any $\Psi \in \widehat{P}_N$ (defined in \cref{eq:def-PNHat})
  and $l = 0,1,2$
  \begin{equation}
    \left\|\partial_x^l \Psi\right\| \lesssim N^{\frac{l}{2}}\|\Psi\|.
  \end{equation}
\end{corollary}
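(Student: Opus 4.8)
The plan is to bootstrap from the two preceding lemmas using the decomposition $\partial_x = \widehat{\partial}_x - x$: each ordinary derivative is traded for one application of $\widehat{\partial}_x$, which costs a factor $\sqrt{N}$ by \cref{thm:1102-inverse}, plus one multiplication by $x$, which by \cref{lem:1102-lem-inverse} is itself controlled by $\|\Psi\| + \|\widehat{\partial}_x \Psi\|$ and hence costs at most another $\sqrt{N}$. The case $l=0$ is trivial, so the content is in iterating the $l=1$ estimate.

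First I would record that the relevant finite-dimensional spaces are stable under $\partial_x$ up to a bounded increase in degree. Indeed, \cref{eq:Herm-derivative} gives $\widehat{\partial}_x \widehat{H}_n = \sqrt{2n}\,\widehat{H}_{n-1}$, while the three-term recurrence \cref{eq:three-recur} shows $x\widehat{H}_n$ is a linear combination of $\widehat{H}_{n-1}$ and $\widehat{H}_{n+1}$; hence $\partial_x = \widehat{\partial}_x - x$ maps $\widehat{P}_N$ (defined in \cref{eq:def-PNHat}) into $\widehat{P}_{N+1}$, and by induction $\partial_x^l \Psi \in \widehat{P}_{N+l}$ whenever $\Psi \in \widehat{P}_N$. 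This is exactly what keeps \cref{thm:1102-inverse} applicable after each differentiation. Next, for any $\Phi \in \widehat{P}_M$ I would combine $\|\widehat{\partial}_x \Phi\| \lesssim \sqrt{M}\,\|\Phi\|$ (\cref{thm:1102-inverse}) with $\|x\Phi\| \leqslant \|\Phi\| + \|\widehat{\partial}_x \Phi\| \lesssim \sqrt{M}\,\|\Phi\|$ (\cref{lem:1102-lem-inverse}, for $M\geqslant 1$) and the triangle inequality applied to $\partial_x \Phi = \widehat{\partial}_x \Phi - x\Phi$ to obtain the single-step bound $\|\partial_x \Phi\| \lesssim \sqrt{M}\,\|\Phi\|$. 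Applying this with $\Phi = \partial_x^{j}\Psi \in \widehat{P}_{N+j}$ and $M = N+j$, and using that $\sqrt{N+j}\lesssim\sqrt{N}$ for $j$ in the fixed finite range $\{0,1,2\}$, a short induction on $j$ from $0$ up to $l$ delivers $\|\partial_x^l \Psi\| \lesssim N^{l/2}\|\Psi\|$.

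There is no genuine obstacle here; the two points that require a little care are (i) tracking that each differentiation raises the Hermite degree by at most one, so that the inverse inequality \cref{thm:1102-inverse} can be reapplied to $\partial_x^{j}\Psi$, and (ii) absorbing the $j$-dependent constants arising from $\sqrt{N+j}$ versus $\sqrt{N}$ into the symbol $\lesssim$, which is legitimate precisely because $l$ ranges over the fixed set $\{0,1,2\}$. Since the proof is this routine, we state the result and omit the details, as announced.
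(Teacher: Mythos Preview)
Your proposal is correct and follows essentially the same approach as the paper: both use the decomposition $\partial_x = \widehat{\partial}_x - x$, bound $\|\widehat{\partial}_x\Psi\|$ by \cref{thm:1102-inverse} and $\|x\Psi\|$ by \cref{lem:1102-lem-inverse}, and then iterate. If anything, you are more explicit than the paper, which simply writes ``the case of $l=2$ can be proved in the same fashion''; your observation that $\partial_x$ maps $\widehat{P}_N$ into $\widehat{P}_{N+1}$ (so that the inverse inequality can be reapplied with $N$ replaced by $N{+}1$) is exactly the point that makes the iteration go through.
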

\begin{proof}
  The case of $l = 0$ is obvious.

  For $l = 1$, we have
  \begin{equation}\label{eq:1118-0304}
    \begin{aligned}
    \left\|\partial_x \Psi\right\| & \leqslant\|x \Psi\|+\bigl\|\widehat{\partial}_x \Psi\bigr\| \\
    & \lesssim\|\Psi\|+\bigl\|\widehat{\partial}_x \Psi\bigr\| \quad \left(\cref{lem:1102-lem-inverse}\right)\\
    & \lesssim \sqrt{N}\|\Psi\| \quad \left(\cref{thm:1102-inverse}\right).
    \end{aligned}
  \end{equation}
  The case of $l=2$ can be proved recursively.
\end{proof}

\subsection{Projection error with higher-order derivatives}
The following theorem can be proved following the same
procedure in \cref{sec:main} and using the inverse inequality.

Let $E_{s,N}^\beta(u), E_{f,N}^\beta(u), E_{h,N}(u)$ denote the 
spatial truncation error, frequency truncation error, and
Hermite spectral error of $u$, i.e.,
\begin{equation}
  \begin{aligned}
  & E_{s,N}^\beta(u)=\Bigl\|u \cdot \mathbb{I}_{\left\{|x|>\frac{1}{2\sqrt{2}} \sqrt{N} / \beta\right\}}\Bigr\|, \\
  & E_{f,N}^\beta(u)=\Bigl\|\mathcal{F} [u] (k) \cdot \mathbb{I}_{\left\{|k|>\frac{1}{2\sqrt{2}} \sqrt{N} \beta\right\}}\Bigr\|, \\
  & E_{h,N}(u)=\|u\| e^{-\frac{1}{16} N} .
  \end{aligned}
\end{equation}
Then define
\begin{equation}
  E_N^\beta(u) = E_{s,N}^\beta(u) + E_{f,N}^\beta(u) + E_{h,N}(u).
\end{equation}
We have
\begin{theorem}\label{thm:1117-0304}
  \begin{equation}
    \begin{aligned}
      \left\|u-\widehat{\Pi}_N^\beta u\right\| &\lesssim E_N^\beta(u), \\
      \left\|\partial_x\bigl(u-\widehat{\Pi}_N^\beta u\bigr)\right\| &\lesssim E_N^\beta\left(\partial_x u\right)+\beta \sqrt{N} E_N^\beta(u), \\
      \left\|\partial_x^2\bigl(u-\widehat{\Pi}_N^\beta u\bigr)\right\| &\lesssim E_N^\beta\left(\partial_x^2 u\right)+\beta\sqrt{N} E_N^\beta(\partial_x u)+\beta^2 N E_N^\beta(u).
    \end{aligned}
  \end{equation}
\end{theorem}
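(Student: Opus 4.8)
The plan is to reduce everything to the scaling-free case $\beta=1$ and then bootstrap from \cref{thm:L2-proj} using the inverse inequalities of the previous subsection. First I would dispose of the scaling factor exactly as in the proof of \cref{thm:L2-proj}: setting $v(x) = \beta^{-1/2} u(x/\beta)$ one has $\|\partial_x^l(u - \widehat{\Pi}_N^\beta u)\| = \beta^l \|\partial_x^l(v - \widehat{\Pi}_N v)\|$, and $\partial_x^j v(x) = \beta^{-j-1/2}(\partial_x^j u)(x/\beta)$, so each quantity $E_N^1(\partial_x^j v)$ translates back to $E_N^\beta(\partial_x^j u)$ up to the bookkeeping powers of $\beta$ that appear in the statement. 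This way it suffices to prove the three inequalities with $\beta=1$, i.e. $\|\partial_x^l(u-\widehat{\Pi}_N u)\| \lesssim \sum_{j\le l} N^{(l-j)/2} E_N^1(\partial_x^j u)$ for $l=0,1,2$; the $l=0$ case is exactly \cref{thm:L2-proj}.

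For $l=1$, the key observation is that $\partial_x$ commutes well with projection onto $\widehat{P}_N$ only up to a controlled error. Write $e = u - \widehat{\Pi}_N u$ and split $\partial_x e = (\partial_x e - \widehat{\Pi}_N \partial_x e) + (\widehat{\Pi}_N \partial_x e - \partial_x \widehat{\Pi}_N e)$, since $\partial_x e = \partial_x u - \partial_x \widehat{\Pi}_N u$ and $\widehat{\Pi}_N \partial_x e = \widehat{\Pi}_N \partial_x u - \widehat{\Pi}_N\partial_x\widehat{\Pi}_N u = \widehat{\Pi}_N\partial_x u - \partial_x \widehat{\Pi}_N u$ (because $\partial_x \widehat{\Pi}_N u \in \widehat{P}_{N+1}$, one must instead use the Hermite coefficient structure directly). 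The cleanest route is via coefficients: expand $u = \sum_n a_n \widehat{H}_n$, so $e = \sum_{n>N} a_n \widehat{H}_n$, and use $\partial_x \widehat{H}_n = \sqrt{n/2}\,\widehat{H}_{n-1} - \sqrt{(n+1)/2}\,\widehat{H}_{n+1}$ from \cref{eq:Herm-derivative} to write $\partial_x e$ explicitly; the only low-index term that "leaks" below $N$ is a single multiple of $\widehat{H}_N$ coming from $a_{N+1}\partial_x\widehat{H}_{N+1}$, whose norm is $\lesssim \sqrt{N}\,|a_{N+1}| \le \sqrt{N}\,\|e\|$. The remaining high-frequency part is $\|\partial_x e \cdot \mathbb{I}_{\text{tail}}\|$ which is controlled by $\|(\partial_x u - \widehat{\Pi}_N \partial_x u)\| + (\text{leakage})$, and the first of these is $\lesssim E_N^1(\partial_x u)$ by \cref{thm:L2-proj} applied to $\partial_x u$. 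Collecting, $\|\partial_x e\| \lesssim E_N^1(\partial_x u) + \sqrt{N}\,E_N^1(u)$. The $l=2$ case iterates this argument once more: differentiate again, the leakage is now a bounded number of low-index terms whose norms are $\lesssim N^{(2-j)/2}$ times lower-order projection errors, invoke the Corollary bound $\|\partial_x^l \Psi\| \lesssim N^{l/2}\|\Psi\|$ on $\widehat{P}_N$ to absorb the $\widehat{\Pi}_N$-part, and apply \cref{thm:L2-proj} to $\partial_x^2 u$ and $\partial_x u$ for the genuinely high-frequency parts, yielding $\|\partial_x^2 e\| \lesssim E_N^1(\partial_x^2 u) + E_N^1(\partial_x u) + N\, E_N^1(u)$.

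The main obstacle is the bookkeeping of the "leakage" terms — making precise which finitely many Hermite modes below index $N$ are polluted by differentiating a tail $\sum_{n>N}a_n\widehat{H}_n$, and checking that each such mode's coefficient is controlled by $\|e\|$ (hence by $E_N^1(u)$) with exactly the right power of $\sqrt N$ in front. This is genuinely routine given \cref{eq:Herm-derivative} and the Corollary, but it is where sign-chasing and index-shifting must be done carefully; it is precisely the step the authors elect to omit, which is consistent with the remark that "there is no essential difficulty here." One should also double-check that the $\beta$-powers produced by the change of variables match the asymmetric form $\beta\sqrt N$, $\beta$, $\beta^2 N$ in the statement rather than, say, uniform powers — this follows because differentiation in $x$ scales like $\beta$ while the inverse-inequality factor contributes the remaining $\sqrt N$ or $N$.
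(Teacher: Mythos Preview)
Your overall strategy --- reduce to $\beta=1$ by rescaling, then bootstrap from \cref{thm:L2-proj} via Hermite-coefficient bookkeeping --- is sound and, notably, different from what the paper sketches. The paper's hint (``following the same procedure in \cref{sec:main} and using the inverse inequality'') means to rerun the full truncation argument: introduce $u_M$, $u_M^B$, the Gaussian representation, and at each step where a norm $\|\cdot\|$ appears carry along a $\partial_x^l$ and absorb it into $\widehat P_N$-pieces via the Corollary $\|\partial_x^l\Psi\|\lesssim N^{l/2}\|\Psi\|$. Your route instead treats \cref{thm:L2-proj} as a black box and works purely at the level of Hermite coefficients. The advantage of your way is economy --- no need to reopen the Gaussian machinery --- while the paper's way keeps all constants explicit in one pass and avoids any coefficient chasing.

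There is, however, a real gap in your leakage accounting. Write $e=\sum_{n>N}a_n\widehat H_n$ and compute $\partial_x e$ using \cref{eq:Herm-derivative}. You correctly identify the single mode $\widehat H_N$ below index $N$, with coefficient $a_{N+1}\sqrt{(N+1)/2}$, controlled by $\sqrt N\,\|e\|$. But when you compare the tail of $\partial_x e$ (indices $\ge N+1$) with $\partial_x u-\widehat\Pi_N\partial_x u$, the difference is exactly $a_N\sqrt{(N+1)/2}\,\widehat H_{N+1}$, and $a_N$ is \emph{not} a tail coefficient of $u$: it is not bounded by $\|e\|$. Your claim that ``each such mode's coefficient is controlled by $\|e\|$'' is therefore false as stated. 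The fix is short but not automatic: observe that the Hermite coefficient of $\partial_x u$ at index $N+1$ is $a_{N+2}\sqrt{(N+2)/2}-a_N\sqrt{(N+1)/2}$, whence
\[
\sqrt{N}\,|a_N|\ \lesssim\ \bigl|(\partial_x u,\widehat H_{N+1})\bigr|+\sqrt{N}\,|a_{N+2}|
\ \le\ \|\partial_x u-\widehat\Pi_N\partial_x u\|+\sqrt{N}\,\|e\|
\ \lesssim\ E_N^1(\partial_x u)+\sqrt N\,E_N^1(u),
\]
which is exactly the bound you need. The same issue recurs (with $a_{N-1},a_N$) at $l=2$, and the same trick --- read off the rogue coefficient from a suitable Hermite coefficient of $\partial_x u$ or $\partial_x^2 u$ --- disposes of it. Once this is patched your argument goes through; your scaling reduction and the final $\beta$-bookkeeping are correct.
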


\subsection{Interpolation error}
Let $\left\{x_j\right\}_{j=0}^N$ be the roots of $\widehat{H}_{N+1}(x)$, define the interpolation
operator $\widehat{I}_N^\beta: L^2(\mathbb{R}) \rightarrow \widehat{P}_N^\beta$ by
\begin{equation}\label{eq:def-hatInterpolation}
  u\left(x_j / \beta\right) = \widehat{I}_N^\beta [u] \left(x_j / \beta\right), \quad \forall\, 0 \leqslant j \leqslant N.
\end{equation}
In particular, $\widehat{I}_N = \widehat{I}_N^1$.

The procedure for establishing the interpolation error from the projection
error is the same as in \cite{aguirre_hermite_2005}. We only list a key lemmas and the final
results.

By Theorem 6 of \cite{aguirre_hermite_2005} we obtain the following lemma:
\begin{lemma}\label{lem:hatInterpolation-stability}
  \begin{equation}\label{eq:hat-int-stable}
    \left\|\widehat{I}_N u\right\| \lesssim N^{\frac{1}{6}}\|u\|+N^{-\frac{1}{3}}\|\partial_x u\|.
  \end{equation}
\end{lemma}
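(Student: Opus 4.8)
The plan is to reduce the stated bound to a weighted pointwise estimate at the interpolation nodes. Let $\{\tilde w_j\}_{j=0}^N$ denote the weights of the $(N+1)$-point Gauss--Hermite rule in the normalisation adapted to Hermite functions, so that $\sum_{j=0}^N \tilde w_j\,\phi(x_j)\psi(x_j) = \int_{\mathbb R}\phi\psi$ whenever $e^{x^2}\phi\psi$ is a polynomial of degree at most $2N+1$; this applies in particular to any $\phi,\psi\in\widehat{P}_N$. Since $\widehat{I}_N u\in\widehat{P}_N$ and interpolates $u$ at the nodes, taking $\phi=\psi=\widehat{I}_N u$ yields the exact identity
\[
\bigl\|\widehat{I}_N u\bigr\|^2 \;=\; \sum_{j=0}^N \tilde w_j\,\bigl|u(x_j)\bigr|^2 ,
\]
so it suffices to bound this weighted sum of point values of $u$ in terms of $\|u\|^2$ and $\|\partial u\|^2$.

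Next I would dominate each point value by a local Sobolev inequality: for an interval $I$ of length $\delta$ and any $\xi\in I$, averaging $u(\xi)^2 = u(\eta)^2 + 2\int_\eta^\xi u\,\partial u$ over $\eta\in I$ and applying Young's inequality gives $|u(\xi)|^2 \lesssim \delta^{-1}\|u\|_{L^2(I)}^2 + \delta\,\|\partial u\|_{L^2(I)}^2$. I apply this at $\xi=x_j$ on intervals $I_j$ centred at $x_j$ of a common length $\delta\sim N^{-1/2}$; since the minimal gap between consecutive zeros of $\widehat{H}_{N+1}$ (attained near the origin) is bounded below by a fixed multiple of $N^{-1/2}$, the $I_j$ can be chosen pairwise disjoint. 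Summing the identity above, pulling out $\max_j\tilde w_j\lesssim N^{-1/6}$, and using disjointness to bound $\sum_j\|\cdot\|_{L^2(I_j)}^2$ by the full $L^2$ norm, I obtain
\[
\bigl\|\widehat{I}_N u\bigr\|^2 \;\lesssim\; \frac{\max_j \tilde w_j}{\delta}\,\|u\|^2 \;+\; \delta\,(\max_j \tilde w_j)\,\|\partial u\|^2 \;\lesssim\; N^{1/3}\|u\|^2 + N^{-2/3}\|\partial u\|^2 ,
\]
and taking square roots gives the claim. A choice of $\delta$ adapted to the local node spacing would sharpen the exponents, but the crude uniform choice already delivers the stated bound.

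The step I expect to be the main obstacle is supplying the two quantitative inputs: the $N^{-1/2}$ lower bound on the spacing of the Gauss--Hermite nodes and the bound $\max_j\tilde w_j\lesssim N^{-1/6}$. Through the classical identity $\tilde w_j = \bigl((N+1)\,\widehat{H}_N(x_j)^2\bigr)^{-1}$ the latter reduces to a uniform lower bound on $|\widehat{H}_N|$ at the interpolation nodes, and both facts ultimately rest on the Plancherel--Rotach asymptotics of Hermite functions, valid across the oscillatory interior and the Airy regime near the turning point $x\sim\sqrt{2N}$ (see \cite{olver_nist_2010}); rather than re-derive them I would quote them together with the companion node/weight estimates of \cite{aguirre_hermite_2005}. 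The rest is routine bookkeeping --- the disjointness of the $I_j$ and the passage to the global norm --- so the argument follows the template of \cite{aguirre_hermite_2005} exactly as the paper indicates.
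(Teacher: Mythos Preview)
Your proposal is correct, and in fact it supplies more than the paper does: the paper does not prove this lemma at all but simply quotes it as Theorem~6 of \cite{aguirre_hermite_2005}, and your sketch --- the exact quadrature identity for $\|\widehat{I}_N u\|^2$, a local Sobolev bound on disjoint intervals of width $\delta\sim N^{-1/2}$, combined with $\max_j\tilde w_j\lesssim N^{-1/6}$ --- is precisely the argument behind that cited result. You correctly identify the node-spacing lower bound and the weight estimate as the only inputs that require Plancherel--Rotach asymptotics and defer them to the same references, so there is nothing further to add.
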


\begin{theorem}\label{thm:1117-0306}
  \begin{equation}
    \begin{aligned}
    \left\|u-\widehat{I}_N^\beta u\right\| &\lesssim \beta^{-1} N^{-\frac{1}{3}} E_N^\beta\left(\partial_x u\right)+N^{\frac{1}{6}} E_N^\beta(u), \\
    \left\|\partial_x\left(u-\widehat{I}_N^\beta u\right)\right\| &\lesssim N^{\frac{1}{6}} E_N^\beta\left(\partial_x u\right)+\beta N^{\frac{2}{3}} E_N^\beta(u), \\
    \left\|\partial_x^2\left(u-\widehat{I}_N^\beta u\right)\right\| &\lesssim E_N^\beta\left(\partial_x^2 u\right)+\beta N^{\frac{2}{3}} E_N^\beta\left(\partial_x u\right)+\beta^2 N^{\frac{7}{6}} E_N^\beta(u).
    \end{aligned}
  \end{equation}
\end{theorem}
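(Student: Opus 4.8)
The plan is to follow the standard passage from projection to interpolation error used in \cite{aguirre_hermite_2005}: reduce to $\beta=1$, write the interpolation error as a projection error plus the interpolation of a projection error, and then invoke \cref{thm:1117-0304}, the stability estimate \cref{lem:hatInterpolation-stability}, and the inverse inequalities of the previous subsection.

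First I would reduce to the unscaled case. Put $v(x)=\beta^{-1/2}u(x/\beta)$. Exactly as in the proof of \cref{thm:L2-proj}, the dilation $x\mapsto\beta x$ intertwines $\widehat{I}_N^\beta$ with $\widehat{I}_N$ and $\widehat{\Pi}_N^\beta$ with $\widehat{\Pi}_N$, so $u-\widehat{I}_N^\beta u=\beta^{1/2}\,(v-\widehat{I}_N v)(\beta\,\cdot\,)$ and hence $\|\partial_x^l(u-\widehat{I}_N^\beta u)\|=\beta^l\,\|\partial_x^l(v-\widehat{I}_N v)\|$ for $l=0,1,2$. A change of variables in each of the three constituents of $E_N$, together with their homogeneity, gives $E_N^1(\partial_x^j v)=\beta^{-j}E_N^\beta(\partial_x^j u)$ for $j=0,1,2$. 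Substituting these identities shows that the three asserted inequalities for general $\beta$ are equivalent to the same three with $\beta=1$, so I may take $\beta=1$ from now on.

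Since $\widehat{\Pi}_N u\in\widehat{P}_N$ and $\widehat{I}_N$ reproduces $\widehat{P}_N$ (interpolation at the $N+1$ zeros of $\widehat{H}_{N+1}$ is unisolvent on the $(N+1)$-dimensional space $\widehat{P}_N$), we have $\widehat{I}_N\widehat{\Pi}_N u=\widehat{\Pi}_N u$, so that $u-\widehat{I}_N u=(u-\widehat{\Pi}_N u)-\widehat{I}_N(u-\widehat{\Pi}_N u)$. The first term and its first two derivatives are controlled directly by \cref{thm:1117-0304} with $\beta=1$. For the second term, $\widehat{I}_N(u-\widehat{\Pi}_N u)\in\widehat{P}_N$, so the inverse inequality $\|\partial_x^l\Psi\|\lesssim N^{l/2}\|\Psi\|$ on $\widehat{P}_N$ reduces everything to estimating $\|\widehat{I}_N(u-\widehat{\Pi}_N u)\|$; applying \cref{lem:hatInterpolation-stability} to $u-\widehat{\Pi}_N u$ and then bounding the two resulting projection errors again by \cref{thm:1117-0304} yields $\|\widehat{I}_N(u-\widehat{\Pi}_N u)\|\lesssim N^{1/6}E_N^1(u)+N^{-1/3}E_N^1(\partial_x u)$.

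To finish, for each $l\in\{0,1,2\}$ I would add $\|\partial_x^l(u-\widehat{\Pi}_N u)\|$ (bounded by \cref{thm:1117-0304}) to $N^{l/2}\|\widehat{I}_N(u-\widehat{\Pi}_N u)\|$, merge like terms using that higher powers of $N$ dominate (e.g.\ $N^{1/6}\gtrsim1$, $N^{2/3}E_N^1(\partial_x u)\gtrsim E_N^1(\partial_x u)$), and then undo the $\beta$-scaling from the first step; this produces exactly the three stated estimates. There is no conceptual obstacle here — the paper itself flags the argument as routine — so the main difficulty is purely the bookkeeping: keeping the exact powers of $N$ and $\beta$ straight through the dilation and through three successive compositions of estimates. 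One caveat worth recording is that the statement tacitly assumes $u$ is regular enough for $\widehat{I}_N u$ to be defined, an assumption already implicit in \cref{lem:hatInterpolation-stability}.
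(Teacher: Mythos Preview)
Your proposal is correct and follows exactly the approach the paper intends: the paper omits the proof entirely, stating only that ``the procedure of establishing interpolation error from projection error is the same as in \cite{aguirre_hermite_2005},'' and your decomposition $u-\widehat{I}_N u=(u-\widehat{\Pi}_N u)-\widehat{I}_N(u-\widehat{\Pi}_N u)$ together with \cref{lem:hatInterpolation-stability}, the inverse inequalities, and \cref{thm:1117-0304} is precisely that procedure. The bookkeeping of powers of $N$ and $\beta$ checks out line by line against the three stated inequalities.
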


\subsection{Application: optimality of scaled Gauss--Hermite quadrature}
\label{sec: Hermite optimality}
Consider the scaled Gauss--Hermite quadrature
(taking $\beta = 1$ in \cref{eq: Hermite quadrature} yields a standard Gauss--Hermite quadrature)
\begin{equation}\label{eq: Hermite quadrature}
  Q_N^\beta (h_1(x) h_2(x)) = \int_{-\infty}^{\infty} \widehat{I}_N^\beta (h_1) \widehat{I}_N^\beta (h_2) dx,
\end{equation}
where the interpolation operator $\widehat{I}_N^\beta$ is defined by \cref{eq:def-hatInterpolation}, then
with proper $\beta$, the Gauss--Hermite quadrature can be comparable to other methods.
Notice that the error between $Q_N^\beta (h_1 h_2)$ and exact value can be controlled by
$\|h_1 - \widehat{I}_N^\beta (h_1)\|$ and $\|h_2 - \widehat{I}_N^\beta (h_2)\|$, then
by \cref{thm:L2-proj} and \cref{lem:hatInterpolation-stability} we can establish
an estimate.

Kazashi et al.~\cite{kazashi_suboptimality_2023} compared the Gauss--Hermite quadrature
with the trapezoidal rule. They argued that Gauss--Hermite quadrature is only ``sub-optimal'',
since for function $f$ with $\alpha$ order smoothness in some sense, only an order about $N^{-\alpha/2}$ with $N$ function evaluations by Gauss--Hermite quadrature
$Q_N^1(f e^{-x^2})$ can be achieved,
while a suitably truncated trapezoidal rule achieves about $N^{-\alpha}$ up to a logarithmic factor.
However, since the trapezoidal rule requires $f e^{-x^2}$ to have exponential decay $e^{-px^2}$ with $p >0$, then under the same
conditions, let us denote:
\begin{displaymath}
  h_1(x) = f e^{-(1-p/2) x^2},\quad h_2(x) = e^{-p/2 x^2}.
\end{displaymath}
Since $h_1(x)$ has exponential decay, by $\alpha$ order smoothness, its Fourier
transform $\mathcal{F}[h_1](\xi)$ at least has algebraic decay similar to $|\xi|^{-\alpha}$. 
Let $\beta = C \sqrt{N} / \sqrt{\ln N}$, 
by \cref{thm:L2-proj} and \cref{lem:hatInterpolation-stability}, 
$\|h_1 - \widehat{I}_N^\beta (h_1)\|$, $\|h_2 - \widehat{I}_N^\beta (h_2)\|$
and the scaled Gauss--Hermite quadrature $Q_N^\beta (f e^{-x^2}) = Q_N^\beta (h_1 h_2)$ defined in \cref{eq: Hermite quadrature} can achieve a convergence rate of about $N^{-\alpha}$ up to a logarithmic factor.

For functions decaying at the rate $\exp(-\tilde{C}|x|^\rho)\,\,(\rho \geq 1)$ on the real axis under other suitable assumptions, 
Sugihara~\cite{sugihara1997optimality} established the rate
$\exp(-C N^{\rho / (\rho+1)})$ by trapezoidal rule, while the existing literature only shows that the Hermite approximation can achieve a rate $\exp(-C\sqrt{N})$. Now, we show that by proper scaling, the convergence rate $\exp(-C N^{\rho / (\rho+1)})$ can be recovered by a scaled Hermite approximation.
Define
\begin{displaymath}
  h_1(x) = f(x) / \sqrt{\omega(x)}, \quad h_2(x) = \sqrt{\omega(x)},
\end{displaymath}
where $\omega(x)$ is defined in Theorem 3.1 of~\cite{sugihara1997optimality}.
Then $h_1(x), h_2(x)$ both have $\exp(-\tilde{C}|x|^\rho)$ decay, and
by Theorem 1.8.4 of Stenger's book~\cite{stenger1993numerical}, Fourier transforms
of $h_1(x), h_2(x)$ decay at least as $\exp(-c|\xi|)$. Using decay information in spatial and frequency domains, by \cref{thm:L2-proj} and \cref{lem:hatInterpolation-stability}, 
one can find without scaling $\|h_1 - \widehat{I}_N^\beta (h_1)\|$, $\|h_2 - \widehat{I}_N^\beta (h_2)\|$ and the convergence rate of the Gauss--Hermite quadrature $Q_N^\beta(f) = Q_N^\beta(h_1 h_2)$
defined in \cref{eq: Hermite quadrature} is at least $\exp(-C\sqrt{N})$, while by a proper
scaling the rate can also achieve $\exp(-CN^{\rho/(\rho+1)})$, which is comparable to the trapezoidal rule. Given that Goda et al. indicated that this convergence order is a sharp upper bound for the worst case~\cite{goda2024sharp}, our argument demonstrates that the Hermite quadrature likewise achieves optimality.

Applying a similar argument to the functions investigated by
Trefethen~\cite{trefethen2022exactness}, which are analytic in
a strip with $\exp(-x^2)$ decay on the real axis,
the scaled Gauss--Hermite quadrature error can be improved to $\exp(-CN^{2/3})$ by proper scaling, comparable to domain truncation methods. 
The above argument holds for
$f(x) = \cos(x^3)$, which is mentioned in a numerical experiment in~\cite{trefethen2022exactness}.


\section{Numerical results and discussions}
\label{sec:experiments}
In this section, we validate our error estimates with numerical examples on
quadrature, interpolation, and solving the model equation in the following.
Some typical examples are selected to show the new insights gained from
our estimates.

\subsection{Model problem}
Consider the model problem:
\begin{equation}\label{eq:model-problem}
  -u_{x x}+\gamma u=f,\quad  x \in \mathbb{R}, \gamma>0 ; \quad \lim _{|x| \rightarrow \infty} u(x)=0 \text {. }
\end{equation}
The scaled Hermite--Galerkin method for \cref{eq:model-problem} is
\begin{equation}\label{eq:Hemrite-Galerkin}
  \left\{\begin{array}{l}
\text { Find } u_N \in \widehat{P}_N^{\beta} \text { such that } \\
\left(\partial_x u_N, \partial_x v_N\right)+\gamma\left(u_N, v_N\right)=\left(\widehat{I}_N^\beta f, v_N\right), \quad \forall v_N \in \widehat{P}_N^{\beta},
\end{array}\right.
\end{equation}
where $\widehat{I}_N^{\beta}$ is the modified Hermite--Gauss interpolation operator.

The error of the numerical solution can be controlled by projection
error and interpolation error as
\begin{equation}\label{eq:cea}
  \left\|u - u_N\right\|_1 \leqslant C_\gamma \left(\bigl\|u - \widehat{\Pi}_N^{\beta}u\bigr\|_1+\bigl\|f-\widehat{I}_N^{\beta} f\bigr\|\right).
\end{equation}
\cref{eq:cea} can be proved by the same argument as Theorem 7.19 of \cite{shen_spectral_2011}.
Since the projection and interpolation error estimates have been established in \cref{thm:1117-0304} and \cref{thm:1117-0306}, 
we get the estimate for $\|u - u_N\|_1$.
\begin{theorem}
  For model problem \cref{eq:model-problem}, the solution of \cref{eq:Hemrite-Galerkin}
  satisfies
  \begin{equation}
    \begin{aligned}
    \left\|u-u_N\right\|_1 & \lesssim E_N^\beta(\partial_x u)+(1+\beta \sqrt{N}) E_N^\beta(u) \\
    & +\beta^{-1} N^{-\frac{1}{3}} E_N^\beta(\partial_x f)+N^{\frac{1}{6}} E_N^\beta(f),
    \end{aligned}
  \end{equation}
  where $\lesssim$ means $\leqslant C_\gamma$ here, $C_\gamma$ is a constant depends on $\gamma$.
\end{theorem}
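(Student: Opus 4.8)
The plan is to combine the abstract elliptic error estimate \cref{eq:cea} with the projection and interpolation error bounds already established in \cref{thm:1117-0304} and \cref{thm:1117-0306}. Since \cref{eq:cea} reads
\begin{equation*}
  \|u - u_N\|_1 \leqslant C_\gamma\Bigl(\|u - \widehat{\Pi}_N^{\beta} u\|_1 + \|f - \widehat{I}_N^{\beta} f\|\Bigr),
\end{equation*}
the whole argument reduces to bounding each of the two terms on the right-hand side by the quantities $E_N^\beta(\partial_x u)$, $E_N^\beta(u)$, $E_N^\beta(\partial_x f)$, $E_N^\beta(f)$ appearing in the claimed estimate.

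First I would control the projection term. By definition $\|w\|_1^2 = \|w\|^2 + \|\partial_x w\|^2$, so $\|u - \widehat{\Pi}_N^{\beta} u\|_1 \lesssim \|u - \widehat{\Pi}_N^{\beta} u\| + \|\partial_x(u - \widehat{\Pi}_N^{\beta} u)\|$. Applying the first two inequalities of \cref{thm:1117-0304} gives
\begin{equation*}
  \|u - \widehat{\Pi}_N^{\beta} u\|_1 \lesssim E_N^\beta(u) + E_N^\beta(\partial_x u) + \beta\sqrt{N}\, E_N^\beta(u) = E_N^\beta(\partial_x u) + (1 + \beta\sqrt{N})\, E_N^\beta(u),
\end{equation*}
which is exactly the first line of the target bound. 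Next I would handle the interpolation term: the first inequality of \cref{thm:1117-0306} states $\|f - \widehat{I}_N^{\beta} f\| \lesssim \beta^{-1} N^{-1/3} E_N^\beta(\partial_x f) + N^{1/6} E_N^\beta(f)$, which is precisely the second line. Adding the two contributions and absorbing $C_\gamma$ into the $\lesssim$ symbol (as the statement explicitly allows) completes the proof.

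The only genuine obstacle is the justification of \cref{eq:cea} itself, i.e. the abstract Céa-type estimate for the Hermite--Galerkin scheme \cref{eq:Hemrite-Galerkin}; but the paper has already deferred this to ``the same argument as Theorem 7.19 of \cite{shen_spectral_2011}'', so within the scope of this proof it is a quotable fact. One minor point worth checking is that the coercivity/continuity constant arising from $-u_{xx} + \gamma u = f$ depends only on $\gamma$ (which it does, since the bilinear form $(\partial_x u,\partial_x v) + \gamma(u,v)$ has coercivity constant $\min\{1,\gamma\}$ and continuity constant $\max\{1,\gamma\}$ on $H^1(\mathbb{R})$), so that the constant hidden in $\lesssim$ is indeed a $C_\gamma$. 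Everything else is a direct substitution, so I would present this proof compactly, essentially in the three displayed lines above.
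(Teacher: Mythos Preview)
Your proposal is correct and matches the paper's own argument exactly: the paper simply quotes \cref{eq:cea} (justified by reference to Theorem~7.19 of \cite{shen_spectral_2011}) and then inserts the first two bounds of \cref{thm:1117-0304} for the projection term and the first bound of \cref{thm:1117-0306} for the interpolation term, precisely as you outline.
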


\subsection{The optimal scaling balances the spatial and frequency truncation error}
From \cref{thm:L2-proj} we know that the projection error can be controlled
by spatial, frequency truncation error, and Hermite spectral error. The
Hermite spectral error, which takes the form $\|u\|e^{-cN}$, is independent
of scaling, therefore, we only need to consider these truncation errors.

If the spatial and frequency truncation error imbalance, i.e., one
is much larger than the other (often by an order of magnitude), 
then by proper scaling we can reduce the error of the dominant side
and thus reduce the total error.

In general, finding the optimal scaling is equivalent to
balancing the spatial and frequency truncation error. In practice, 
we can adopt a strategy similar to the adaptive finite element method:
gradually increasing the number of truncated terms while adjusting
the scaling factor to balance the errors; see \cref{alg:adaptive scale}.

\begin{algorithm}
\caption{Adaptive scaling}
\label{alg:adaptive scale}
\begin{algorithmic}
\STATE{Define $N = N_0,\, \beta = \beta_0$}
\WHILE{$N < N_\text{max}$}
\STATE{Compute spatial and frequency error $E_s(N),\,E_f(N)$}
\IF{$E_s(N) < E_f(N)$}
\STATE{$N = 2N,\, \beta = \sqrt{2}\beta$}
\ELSE
\STATE{$N = 2N,\, \beta = \beta/\sqrt{2}$}
\ENDIF
\ENDWHILE
\RETURN $N,\,\beta$
\end{algorithmic}
\end{algorithm}

Given that this article focuses primarily on theoretical aspects, 
\cref{alg:adaptive scale} is merely a preliminary proposal. 
We will defer the development of a more efficient and
practical adaptive scaling method to a subsequent publication.

Returning to the numerical experiments, in the following examples we will demonstrate how the balancing
principle works.

\subsection{Properly scaled Hermite quadrature achieves optimality}
In this subsection, we verify the validity of the theory presented in
\cref{sec: Hermite optimality}. Take $u = e^{-x^2}/(1+x^8)$ as an example,
let $h_1(x) = e^{-x^2/2}/(1+x^8),\,h_2(x) = e^{-x^2/2}$, then $u=h_1h_2$, we know that
the error between the scaled Hermite quadrature
\begin{displaymath}
  Q_N^\beta (h_1(x) h_2(x)) = \int_{-\infty}^{\infty} \widehat{I}_N^\beta (h_1) \widehat{I}_N^\beta (h_2) dx
\end{displaymath}
and the exact value of the integral $Q(u(x)) = \int_{-\infty}^\infty u(x) dx$
can be controlled by $\|h_1 - \widehat{I}_N^\beta (h_1)\|$ and $\|h_2 - \widehat{I}_N^\beta (h_2)\|$.
Since $h_1(x)$ is an analytic function with certain properties in a strip domain,
by Theorem 1.8.4 of Stenger's book~\cite{stenger1993numerical}, we know that
\begin{equation}\label{eq:0905-1}
  \mathcal{F}[h_1](k) \lesssim e^{-ck}.
\end{equation}
Combining \cref{eq:0905-1} and \cref{thm:L2-proj} yields that for $\beta = 1$
\begin{equation}\label{eq:0905-2}
  \left|Q(u) - Q_N^\beta(u)\right| \lesssim e^{-c\sqrt{N}},
\end{equation}
while for $\beta = N^{\frac{1}{6}}$,
\begin{equation}\label{eq:0905-3}
  \left|Q(u) - Q_N^\beta(u)\right| \lesssim e^{-cN^{2/3}}.
\end{equation}
Since the largest quadrature node of the standard Hermite rule grows on the order of
$\sqrt{N}$, and the optimal scaling factor is $\beta = N^{\frac{1}{6}}$, the nodes
of the scaled Hermtie quadrature are distributed over the interval
$\left[-LN^{\frac{1}{3}},LN^{\frac{1}{3}}\right]$. As we mentioned earlier,
Trefethen~\cite{trefethen2022exactness} also pointed out that for functions analytic in
a strip with $\exp(-x^2)$ decay on the real axis, the Gauss-Legendre, Clenshaw--Curtis and trapezoidal
quadrature can achieve the same convergence rate of $\exp(-CN^{2/3})$ by applying them on
the same truncated interval $\left[-LN^{\frac{1}{3}},LN^{\frac{1}{3}}\right]$.

In numerical experiments,
we choose $\beta$ so that the $N$ nodes
of the scaled Hermite quadrature are distributed over the interval
$\left[-N^{\frac{1}{3}},N^{\frac{1}{3}}\right]$. For the Gauss--Legendre, Clenshaw--Curtis, and trapezoidal
quadrature, we employ them in the same interval with $N$ quadrature points.
\cref{fig:quadrature-cmp} presents the results of the numerical experiments.
\begin{figure}[H]
  \centering
  \includegraphics[width=0.6\linewidth]{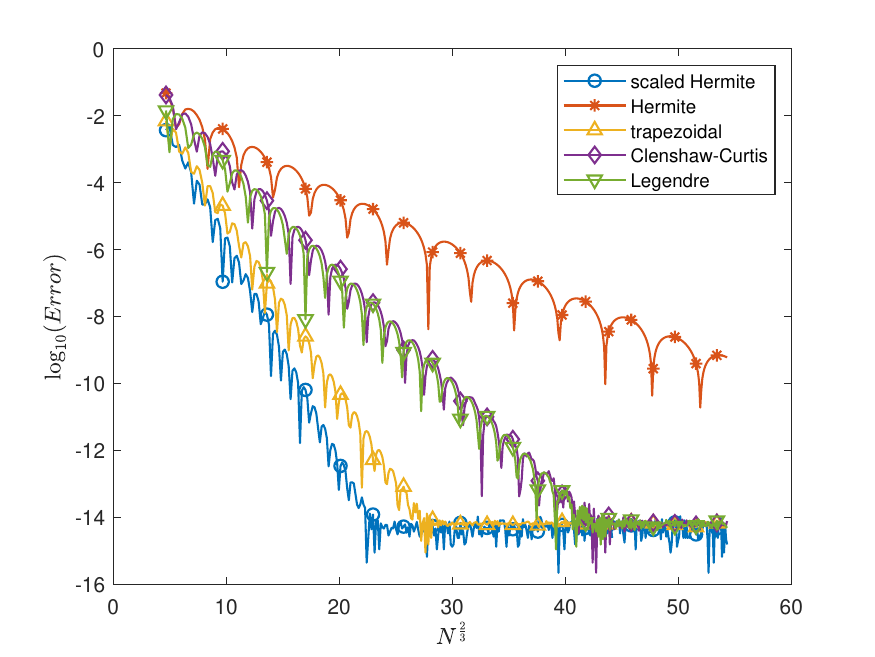}
  \caption{Errors for different quadrature rules to integrate $u=e^{-x^2}\frac{1}{1+x^8}$.  The number of quadrature nodes N varies from 10 to 400.}
  \label{fig:quadrature-cmp}
\end{figure}
The numerical results confirm \cref{eq:0905-3} and demonstrate that 
the properly scaled Hermite quadrature rule achieves efficiency 
comparable to that of a appropriately truncated rule on a bounded domain. 
Moreover, it may even have advantages in certain scenarios.

\subsection{Proper scaling recovers a geometric convergence}
\label{subsec: exp conv}
From (59), (60) of \cite{boyd_fourier_2014}, we know that the Fourier transform of $u = e^{-x^{2n}}$ satisfies
\begin{displaymath}
  \begin{aligned}
  & \Phi(k ; n) \sim\left(\frac{k}{2 n}\right)^{1 /(2 n-1)} \exp \left(z \Psi\left(t_\sigma\right)\right) \frac{\sqrt{\pi}}{\sqrt{z}} \frac{1}{\sqrt{-P_2}} \\
  & =C_1 k^{(1-n) /(2 n-1)} \exp \left(-C_2 k^{2 n /(2 n-1)}\right) \cos \left(C_3 k^{2 n /(2 n-1)}-\xi_n\right),
  \end{aligned}
\end{displaymath}
where $C_1, C_2, C_3, \xi_n$ are constants that depend on $n$.
Since
\begin{displaymath}
  u(x) \lesssim e^{-x^{2n}}, \quad \mathcal{F}[u](k) \lesssim e^{-c k^{2n/(2n-1)}},
\end{displaymath}
by \cref{thm:L2-proj} we know if $\beta = a$ where $a$ is a constant, then
\begin{equation}\label{eq:exp(-x2n)-beta1}
  \|u - \widehat{\Pi}_N^\beta u\| \lesssim e^{-c N^{n/(2n-1)}}.
\end{equation}
Taking $\beta = a \left(N\right)^{(n-1)/(2n)}$ balances the spatial
and frequency truncation error, hence
\begin{equation}\label{eq:exp(-x2n)-beta-opt}
  \|u - \widehat{\Pi}_N^\beta u\| \lesssim e^{-c N}.
\end{equation}

We compute the interpolation error to verify the aforementioned convergence order.
Taking $u = e^{-x^8}$ as an example, with scaling factor
$\beta = 1$, $N$ ranges from $5$ to $500$, the $L^2$ error
$\|u - \widehat{I}_N^\beta u\|$ is presented in \cref{fig:exp(-x8)-beta1}.
Recall that the interpolation operator $\widehat{I}_N^\beta$ is defined in
\cref{eq:def-hatInterpolation}.
Despite an oscillation, we observe the expected convergence order as in 
\cref{eq:exp(-x2n)-beta1}.

\begin{figure}[htbp]
\centering
\subfigure[$\|u - \widehat{I}_N^\beta u\|$, $\beta=1$]{
  \begin{minipage}[t]{0.45\textwidth}
  \centering
  \includegraphics[width=1\textwidth,trim=15 5 20 0, clip]{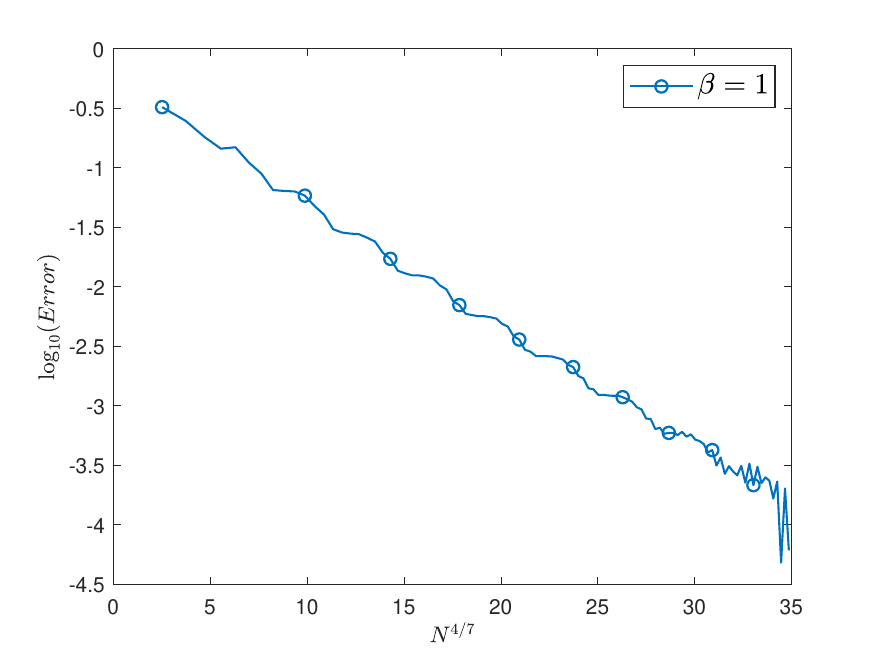}
  \label{fig:exp(-x8)-beta1}
  \end{minipage}
}
\subfigure[$\|u - \widehat{I}_N^\beta u\|$, $\beta=N^{3/8}$]{
  \begin{minipage}[t]{0.45\textwidth}
  \centering
  \includegraphics[width=1\textwidth,trim=15 5 20 0, clip]{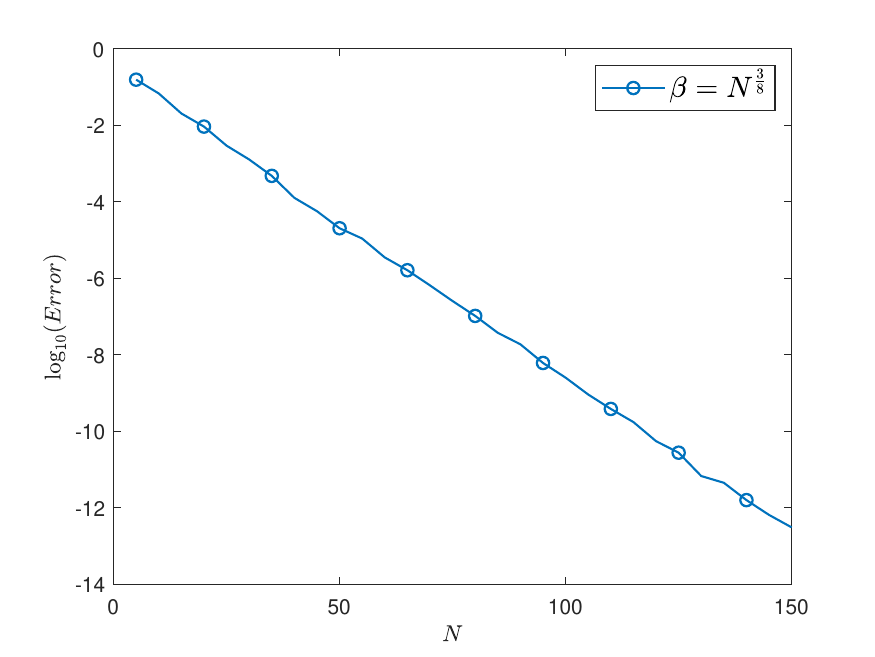}
  \label{fig:exp(-x8)-beta-opt}
  \end{minipage}
}
\caption{Interpolation error without scaling and with optimal scaling for $u=e^{-x^8}$.}
\end{figure}

The $L^2$ error $\|u - \widehat{I}_N^\beta u\|$ corresponding to scaling $\beta = N^{3/8}$
is given in \cref{fig:exp(-x8)-beta-opt} with $N$ ranging from $5$ to $150$, which verifies \cref{eq:exp(-x2n)-beta-opt}.

We believe that the geometric convergence in \cref{eq:exp(-x2n)-beta-opt}
is not a coincidence. It seems that the spatial and frequency truncation errors cannot both achieve
a super-geometric convergence. This can be explained by Hardy's
uncertainty principle \cite{hardy_theorem_1933}:
\begin{theorem}[Hardy's uncertainty principle]\label{thm:uncertainty}
  Let $a,b>0$. Assume that:
  \begin{equation}
    \begin{aligned}
      \left|u(x)\right| &\lesssim e^{-ax^2},\\
      \left|\mathcal{F}[k](u)\right| &\lesssim e^{-bk^2}.
    \end{aligned}
  \end{equation}
  If $ab>\frac{1}{4}$, then $u \equiv 0$, if $ab=\frac{1}{4}$, 
  then $u = C e^{-a x^2}$.
\end{theorem}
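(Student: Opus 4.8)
The plan is to run the classical complex‑analytic argument for Hardy's theorem: analytically continue $\mathcal{F}[u]$, normalise the two Gaussian rates so they coincide, and then close with a Phragm\'en--Lindel\"of plus Liouville argument. First I would remove the asymmetry between $a$ and $b$. Putting $u_\lambda(x):=u(\lambda x)$ with $\lambda=(b/a)^{1/4}$ and using $\mathcal{F}[u_\lambda](k)=\lambda^{-1}\mathcal{F}[u](k/\lambda)$, the hypotheses become $|u_\lambda(x)|\lesssim e^{-cx^2}$ and $|\mathcal{F}[u_\lambda](k)|\lesssim e^{-ck^2}$ with $c:=\sqrt{ab}\ge\tfrac12$ (the product of the rates being scale invariant), and a conclusion $u_\lambda(x)=c_0e^{-cx^2}$ unscales to $u(x)=c_0e^{-ax^2}$. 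When $ab>\tfrac14$ one has $c>\tfrac12$, so the weaker bounds $|u_\lambda(x)|\lesssim e^{-x^2/2}$ and $|\mathcal{F}[u_\lambda](k)|\lesssim e^{-k^2/2}$ also hold, and a nonzero multiple of $e^{-x^2/2}$ cannot obey $|\,\cdot\,|\lesssim e^{-cx^2}$ with $c>\tfrac12$; hence it suffices to prove the single statement
\[
|u(x)|\lesssim e^{-x^2/2}\ \text{ and }\ |\mathcal{F}[u](k)|\lesssim e^{-k^2/2}\ \Longrightarrow\ u=c_0e^{-x^2/2},
\]
the strict case following because there $c_0=0$ and $u\equiv0$.

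Next, the decay $|u(x)|\lesssim e^{-x^2/2}$ makes $F(z):=\frac{1}{\sqrt{2\pi}}\int_{\mathbb{R}}u(x)e^{-ixz}\,dx$ entire, and the Gaussian integral $\int_{\mathbb{R}}e^{-x^2/2+x\eta}\,dx=\sqrt{2\pi}\,e^{\eta^2/2}$ gives $|F(k+i\eta)|\lesssim e^{\eta^2/2}$, while $|F(k)|=|\mathcal{F}[u](k)|\lesssim e^{-k^2/2}$ on the real line. I would then study $G(z):=F(z)e^{z^2/2}$. Writing $z=re^{i\theta}$ and combining the two bounds yields $|G(re^{i\theta})|\lesssim e^{\frac12 r^2\cos^2\theta}$, so $G$ is entire of order at most $2$, bounded on the real axis ($|G(k)|\lesssim e^{-k^2/2}e^{k^2/2}=O(1)$) and bounded on the imaginary axis ($|G(i\eta)|\lesssim e^{-\eta^2/2}e^{\eta^2/2}=O(1)$).

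Then I would pass from the axes to the whole plane. Splitting $u$, hence $F$ and $G$, into even and odd parts (each still satisfying the same estimates), one may assume $G$ is even or odd, so $G(z)=P(z^2)$ or $G(z)=zP(z^2)$ for an entire $P$; the estimate on $G$ turns into $|P(w)|\lesssim e^{(|w|+\operatorname{Re}w)/4}$, which is of exponential type, and the axis bounds on $G$ say exactly that $P$ is bounded on the whole real $w$-axis (positive reals coming from $\operatorname{Re}z$, negative reals from $\operatorname{Im}z$). A Phragm\'en--Lindel\"of argument, using that this growth of $P$ is strictly below the critical rate in every direction off the positive real axis, upgrades ``bounded on $\mathbb{R}$'' to ``bounded on $\mathbb{C}$''; Liouville then forces $G\equiv c_0$, i.e.\ $F(z)=c_0e^{-z^2/2}$, i.e.\ $\mathcal{F}[u]=c_0e^{-k^2/2}$, and since $e^{-x^2/2}$ is a fixed point of $\mathcal{F}$ in this normalisation, $u(x)=c_0e^{-x^2/2}$. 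Together with the first step this gives $u(x)=Ce^{-ax^2}$ when $ab=\tfrac14$ and $u\equiv0$ when $ab>\tfrac14$.

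I expect the Phragm\'en--Lindel\"of step to be the main obstacle. The function $G$ has order exactly $2$ (equivalently $P$ has order exactly $1$), which is precisely the critical exponent for a quadrant (resp.\ a half‑plane), so one cannot simply quote ``bounded on the boundary $+$ controlled growth $\Rightarrow$ bounded''; what rescues the argument is that the coupling $|F(k+i\eta)|\lesssim e^{\eta^2/2}$ together with $|F(k)|\lesssim e^{-k^2/2}$ makes the growth of $G$ strictly sub‑critical in all directions except along the real axis, and converting this into a rigorous borderline Phragm\'en--Lindel\"of estimate (with the attendant $\varepsilon$‑regularisations and passage to the limit) is the technical heart of the proof. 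This is exactly the content of Hardy's $1933$ theorem, and the same argument appears in standard treatments of the uncertainty principle, so here we only sketch it.
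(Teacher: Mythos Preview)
The paper does not prove this theorem at all; it is quoted verbatim as a classical result with a citation to Hardy's 1933 paper \cite{hardy_theorem_1933}, and is used only as a qualitative obstruction (explaining why the indicator in \cref{thm:L2-proj} cannot exhibit super-geometric convergence). So there is no ``paper's own proof'' to compare your attempt against.

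That said, your sketch is the standard complex-analytic proof of Hardy's theorem: rescale to symmetrise the two decay rates, extend $\mathcal{F}[u]$ to an entire function $F$, set $G(z)=F(z)e^{z^2/2}$, observe that $G$ is bounded on both axes and of order~$2$, reduce by parity to an entire function $P$ of order~$1$ bounded on $\mathbb{R}$, and conclude $P$ constant by a borderline Phragm\'en--Lindel\"of/Liouville argument. You correctly identify that the Phragm\'en--Lindel\"of step is exactly at the critical exponent and therefore needs the refined growth bound $|P(w)|\lesssim e^{(|w|+\operatorname{Re}w)/4}$ (equivalently the directional estimate $|G(re^{i\theta})|\lesssim e^{r^2\cos^2\theta/2}$) together with an $\varepsilon$-regularisation; this is precisely Hardy's original argument and its textbook descendants. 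Your outline is sound and matches what the paper would have cited had it included a proof.
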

From \cref{thm:uncertainty} we know that the spatial and frequency
truncation errors introduced in \cref{thm:L2-proj}
cannot both have a super-geometric convergence. 
Therefore, even if the Hermite spectral error is ignored, the estimation in \cref{thm:L2-proj} can at most guarantee a
geometric convergence. Despite this, the possibility of recovering a geometric convergence
is still good news for us.

\subsection{Proper scaling doubles the convergence order}
\label{subsec: double conv}
In this subsection, we consider the approximation of
$u = \left(1+x^2\right)^{-h}$, which stands for a class
of functions having algebraic decay in the spatial domain
and exponential decay in the frequency domain.

Since 
\begin{equation}\label{eq:1104-0407}
  \mathcal{F}[u](k) = \frac{2^{1-h}|k|^{h-1 / 2} K_{h-\frac{1}{2}}(|k|)}{\Gamma(h)}.
\end{equation}
The $K_v(k)$ stands for the modified Bessel function of the second kind,
satisfying
\begin{equation}\label{eq:1104-0408}
  K_v(k) \propto \sqrt{\frac{\pi}{2}} \frac{e^{-k}}{\sqrt{k}}\left(1+\mathcal{O}\left(\frac{1}{k}\right)\right), \quad \text{as } |k| \rightarrow \infty.
\end{equation}
From \cref{eq:1104-0407} and \cref{eq:1104-0408} we know
\begin{equation}
  \mathcal{F}[u](k) \lesssim e^{-|k|} \cdot |k|^{h-1}.
\end{equation}
By \cref{thm:L2-proj}, for $\beta = a$, $a$ is a constant,  we have
\begin{equation}
  \|u - \widehat{\Pi}_N^\beta u\| \lesssim N^{\frac{1}{4}-h}.
\end{equation}
Let $C$ be a large constant,
$\beta = C h \ln N / \sqrt{N}$ balances the spatial and frequency
error. By \cref{thm:L2-proj} we have
\begin{equation}\label{eq:1105-0411}
  \|u - \widehat{\Pi}_N^\beta u\| \lesssim \left(N / \ln N\right)^{\frac{1}{2}-2h}.
\end{equation}
Next, we solve the model problem \cref{eq:model-problem} by the
Hermite--Galerkin method defined in \cref{eq:Hemrite-Galerkin} with
$\gamma = 1$, true solution $u = 1/(1+x^2)^h$.
Take $u = 1/(1+x^2)$ as an example, with scaling factors
$\beta = 1$ and $\beta = 10/\sqrt{N}$. Notice that the
scaling factor we choose here is slightly different from \cref{eq:1105-0411}.
If $\beta = C/\sqrt{N}$ where $C$ is a large number, then by \cref{thm:L2-proj},
the frequency truncation error is negligible, in the pre-asymptotic range
we have
\begin{equation}
  \|u - \widehat{\Pi}_N^\beta u\| \lesssim N^{\frac{1}{2}-2h}.
\end{equation}
 The $L^2$ error
$\|u - u_N\|$ is presented in \cref{fig:x-2-beta-cmp}.

\begin{figure}[htbp]
  \centering
  \includegraphics[width=0.6\linewidth]{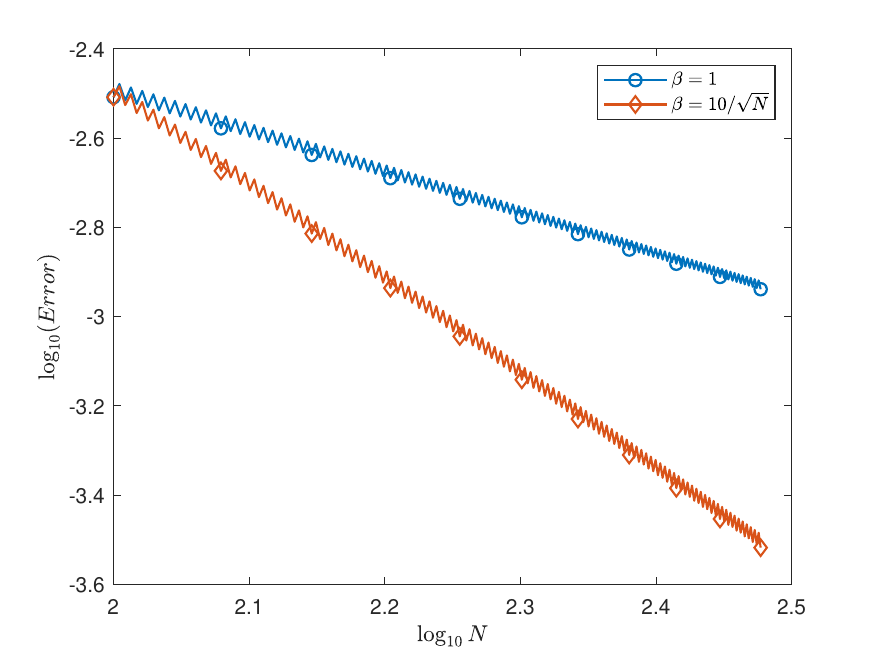}
  \caption{Error $\|u - u_N\|$ in solving model equation \cref{eq:model-problem} with exact solution $u = 1/(1+x^2)$ and 
  scaling factor $\beta=1, \beta = 10/\sqrt{N}$. N ranges from 200 to 300.}
  \label{fig:x-2-beta-cmp}
\end{figure}

For $u = 1/(1+x^2)^h$, taking different $h$, we list the convergence
orders of $\|u - u_N\|$ in \cref{tab:conv-order}. Recall that we use \cref{eq:Hemrite-Galerkin}
to solve the model problem. Here we choose $\beta = 5$ and $\beta = 30/\sqrt{N}$,
with truncated terms that satisfy $200 \leqslant N \leqslant 400$.

\begin{table}[htbp]
\caption{Convergence order of different scaling factor choices for exact solution $u=1/(1+x^2)^h$.}\label{tab:conv-order}
\begin{center}
  \begin{tabular}{|c|c|c|} \hline
   $h$ & $\beta = 5$ & $\beta = 30/\sqrt{N}$ \\ \hline
    1.0 & 0.940 & 2.04 \\
    1.4 & 1.32 & 2.83 \\ 
    1.8 & 1.70 & 3.62 \\ 
    2.2 & 2.07 & 4.41 \\ 
    2.6 & 2.45 & 5.20 \\
    3.0 & 2.83 & 5.99 \\\hline
  \end{tabular}
\end{center}
\end{table}

It can be clearly seen from \cref{tab:conv-order} that proper scaling doubles the convergence order. This fact
holds for all functions that have algebraic(exponential) decay
in the spatial domain and exponential(algebraic) decay in the frequency
domain. 

\subsection{Why error in pre-asymptotic range exhibits sub-geometric convergence}
\label{subsec: puzzling convergence}
When solving the model problem \cref{eq:model-problem} with a true solution
$u = 1/(1+x^2)^h$, \cite{shen_new_2000} reported sub-geometric convergence $\exp(-c\sqrt{N})$ for moderate $N$.
This is puzzling, as the classical error estimate
only predicts a convergence rate of about $N^{-h}$.

In our error analysis framework, the sub-geometric convergence
that occurs in the pre-asymptotic range is
a natural result of \cref{thm:L2-proj}. By \cref{eq:1104-0407} and
\cref{eq:1104-0408}, the frequency truncation error satisfying
\begin{equation}
  \bigl\|\mathcal{F}[u](k) \cdot \mathbb{I}_{\{|k|>b \sqrt{N}\}}\bigr\| \lesssim e^{-c\sqrt{N}},
\end{equation}
while the spatial truncation error satisfies
\begin{equation}
  \bigl\|u \cdot \mathbb{I}_{\{|x|>a \sqrt{N}\}}\bigr\| \lesssim N^{\frac{1}{4}-h}.
\end{equation}
Although asymptotically, the spatial truncation error will be the dominant term, hence the error has an order of about $N^{-h}$,
in the pre-asymptotic range, the frequency truncation error can be
larger than the spatial truncation error, making the total error
show a sub-geometric convergence order.

To make our argument more convincing, we will calculate the position
where the error changes from a sub-geometric convergence $\exp(-c\sqrt{N})$
to an algebraic convergence about $N^{-h}$, then compare our results
with results of numerical experiments.

Recall that $\widehat{I}_N u$ interpolates $u$ at collocation points
$\{x_j\}_{j=0}^N$, which are the roots of $\widehat{H}_{N+1}$. We call the interval $\left[x_0, x_N\right]$ the collocation interval.
Since information outside the collocation interval is not used in interpolation, we cannot
expect a good approximation when the spatial truncation error or frequency truncation error
outside $\left[x_0, x_N\right]$, roughly $\bigl[-\sqrt{2N},\sqrt{2N}\bigr]$, is not negligible.

Based on the above analysis, it is appropriate to choose $\bigl[-\sqrt{2N},\sqrt{2N}\bigr]$ as the
interval to calculate the spatial and frequency truncation error.
The error transforms from sub-geometric convergence to algebraic
convergence when the spatial truncation error equals the frequency truncation error, i.e.,
\begin{equation}
  \bigl\|u \cdot \mathbb{I}_{\{|x|>\sqrt{2N}\}}\bigr\| = \bigl\|\mathcal{F}[u](k) \cdot \mathbb{I}_{\{|k|>\sqrt{2N}\}}\bigr\|.
\end{equation}
For $u_h = 1/(1+x^2)^h$, we find $N$ such that
\begin{equation}\label{eq:1106-0416}
  f_h\left(\sqrt{2N}\right) = \bigl\|u \cdot \mathbb{I}_{\{|x|>\sqrt{2N}\}}\bigr\| - \bigl\|\mathcal{F}[u](k) \cdot \mathbb{I}_{\{|k|>\sqrt{2N}\}}\bigr\| = 0.
\end{equation}
The roots of $f_h$ with different $h$ are listed in \cref{tab:fh-root}.

\begin{table}[htbp]
\caption{Roots of $f_h$ defined in \cref{eq:1106-0416}.}\label{tab:fh-root}
\begin{center}
  \begin{tabular}{|c|c|c|c|c|} \hline
    $h$ & 1.5 & 2.0 & 2.5 & 3.0 \\ 
    \hline
    root & 5.92 & 11.1 & 16.7 & 22.5 \\ \hline   
  \end{tabular}
\end{center}
\end{table}

We solve the model problem \cref{eq:model-problem} using
the Hermite--Galerkin method defined in \cref{eq:Hemrite-Galerkin} with
$\gamma = 1$, true solution $u = 1/(1+x^2)^h$. Taking different
$h$, the behavior of $\|u - u_N\|$ is presented in \cref{fig:x-2h-cmp}.

\begin{figure}[htbp]
  \centering
  \includegraphics[width=0.6\linewidth]{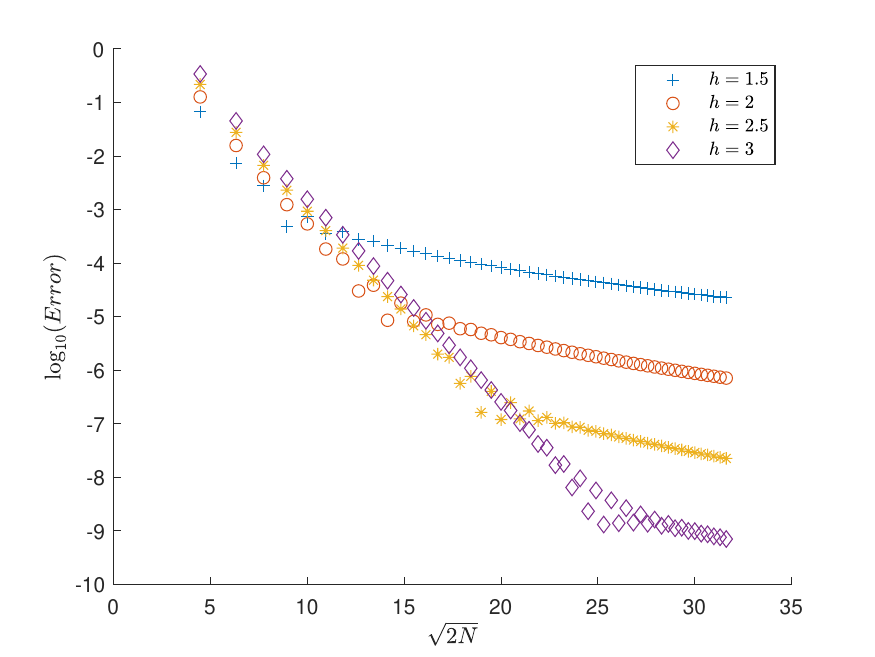}
  \caption{Error $\|u - u_N\|$ in solving model problem 
  \cref{eq:model-problem} with exact solution $u = 1/(1+x^2)^h$ for
  different values of parameter $h$.}
  \label{fig:x-2h-cmp}
\end{figure}

From \cref{tab:fh-root} and \cref{fig:x-2h-cmp} we find that the root of
$f_h$ perfectly matches the position where the error transforms
from sub-geometric to algebraic convergence, this proves the validity
of our theory.

\section{Conclusions}
\label{sec:conclusions}
In this paper, we present a systematic error analysis framework for the scaled Hermite approximation, providing an a priori criterion to select the optimal scaling factor. Taking the $L^2$
projection error as an example, our results
demonstrate that when approximating a function $u$ using the first $N+1$ terms of the scaled Hermite functions $\widehat{H}_n(\beta x)$, the total error consists of three distinct components: spatial truncation error $\|u \cdot \mathbb{I}_{\{|x| \geqslant a\sqrt{N}/\beta\}}\|$,
frequency truncation
error $\|\mathcal{F}[u](k) \cdot \mathbb{I}_{\{|k| \geqslant b \sqrt{N} \beta\}}\|$, and Hermite spectral error
$\|u\|e^{-cN}$. Here, $a,b,c$ are all fixed constants.

Within this framework, determining the optimal scaling factor involves balancing the spatial and frequency truncation error. As a practical application, we demonstrate the optimality of the scaled Gauss--Hermite quadrature, thereby addressing concerns raised in the literature regarding the inefficiency of the Gauss--Hermite quadrature. 
Moreover, we show that proper scaling can restore geometric convergence for functions that decay sufficiently rapidly in both the spatial and frequency domains.
For smooth functions with algebraic decay, our analysis indicates that proper scaling can double the convergence order.
The puzzling pre-asymptotic sub-geometric convergence has been observed more than two decades when approximating algebraically
decaying functions is now clearly explained within our framework.
This framework and the new results can immediately be used to improve the error bounds of applying Hermite methods for important applications (e.g. uncertainty quantification~\cite{xiu_modeling_2003,BabusNT2007StochasticCollocation}) in the existing literature.

It is worth noting that, due to Hardy's uncertainty principle, the proposed framework cannot achieve super-geometric convergence. However, such super-geometric convergence may only be attainable when approximating a very special set of functions, which deserves a special treatment.


\section*{Acknowledgments}
We would like to acknowledge financial support from National Natural Science Foundation of China under Grant No. 12171467, 12494543, 12161141017, Strategic Priority Research Program of Chinese Academy of Sciences under Grant XDA0480504 and National Key 	R\&D Program of China under Grant No. 2021YFA1003601.

\bibliographystyle{siamplain}
\bibliography{Hermite_scaling_paper}
\end{document}